\date{\today}
\newtheorem{thm}{Theorem}[section]
\newtheorem{cor}[thm]{Corollary}
\theoremstyle{definition}
\theoremstyle{remark}
\newtheorem{rem}[thm]{Remark}
\numberwithin{equation}{section}
\newcommand{\R}{\mathbb R}
\newcommand{\He}{\mathbb H}
\newcommand{\C}{{\mathbb C}}
\renewcommand{\Im}{\operatorname{Im}}
\newcommand{\tr}{\operatorname{tr}}
\title[Fourier transform on the Heisenberg group]
{ A scalar valued 
Fourier transform\\ for the Heisenberg group}
\author[ S. Thangavelu]{  Sundaram Thangavelu}
\address[S. Thangavelu]{Department of Mathematics\\
Indian Institute of Science\\
560 012 Bangalore, India}
\email{veluma@iisc.ac.in}
\date{}
 \keywords{Heisenberg group, unitary representations, Fourier transform, Gelfand transform, spectral theory, special Hermite expansions, Hecke-Bochner formula, Hardy and Ingham theorems}
 \subjclass[2010]{Primary:  43A85, 42C05. Secondary: 33C45, 35P10}
\begin{document}

\maketitle

\begin{abstract}  We define a scalar valued Fourier transform for functions on the Heisenberg group and establish some of its basic properties like inversion formula, Plancherel theorem and Riemann-Lebesgue lemma. We also restate certain well known 
theorems for the group Fourier transform in terms of the new transform which we would like to call Strichartz Fourier transform.\\
 \end{abstract}

\begin{center} \it{Dedicated to the memory of Robert Strichartz\\}
\end{center}
\vskip0.25in
{\it I think it was during the summer of 1987 when my friend Der-Chen Chang brought a preprint   of Strichartz  to my attention.  In this work, which was published in 1989 \cite{RS1}, Strichartz developed harmonic analysis as spectral theory of Laplacians. In the context of the Heisenberg group $ \He^n$, the relevant operator was the sublaplacian $\mathcal{L}$, and Strichartz developed the joint spectral theory of $ \mathcal{L} $ and $ T.$  I read this  paper with great zeal and learnt for the first time about special Hermite functions and their importance in the harmonic analysis on $ \He^n.$  This point of view was further developed in another influential  paper \cite{RS2} and both works have played a vital role in all my works related to the Heisenberg group. During the academic year 1991-92, as a visitor to Cornell University, I was fortunate to have the opportunity to discuss mathematics with Strichartz. Needless to say, I greatly benefitted by interacting with him and I am eternally grateful for his kindness and the genuine interest he  showed in my works. As one can see, this article draws quite a lot from the two papers mentioned above.}

\section{Introduction}   Fourier transform of a function $ f $ on a locally compact Lie group $ G $ is defined  as a function on the unitary dual $ \widehat{G} $ consisting of the equivalence classes of irreducible unitary representations of $ G.$ Given a representative $ \pi $ of an element from $ \widehat{G} $  we define
\begin{equation}  \hat{f}(\pi) = \int_{G} f(g) \pi(g) dg \end{equation}
where $ dg $ is the left invariant Haar measure on $ G .$  It is also customary to use the notation $ \pi(f) $ instead of $ \hat{f}(\pi) $ especially when the group is non-abelian. Note that  for $ f \in L^1(G)$, $ \hat{f}(\pi) $ is a bounded linear operator on the Hilbert space $ \mathcal{H}_\pi $ on which the representation $ \pi $ is realised. When $G $ is abelian, all the irreducible unitary representations of $ G $ are one dimensional and hence we can think of them as homomorphisms of $ G $ into the circle group $ S^1 $ consisting of all complex numbers of absolute value one. They are known as unitary characters and for any given $ \chi \in \widehat{G} $ the Fourier transform
\begin{equation}
  \hat{f}(\chi) = \int_G  f(g) \chi(g)  dg 
  \end{equation}
   is a scalar. We can also turn $ \widehat{G} $ into a multiplicative group and hence $ \hat{f} $ becomes a function on this dual group. This allows us to treat Fourier transform on locally compact abelian groups  on par with the well understood Euclidean Fourier transform on $ \R^n.$\\
   
 When the group $ G $ is non abelian, then it is no longer true that all the elements of $ \widehat{G} $ are one dimensional.   If the group is compact, then $ \widehat{G} $ consists only of finite dimensional representations and hence  Fourier transform becomes a matrix valued function which is still manageable. When the group is non compact and non abelian, we need to deal with infinite dimensional representations and hence the Fourier transform becomes operator-valued. As we are mainly interested  in studying  the Fourier transform on the Heisenberg group, let us leave the generality and specialise to the case in hand.\\
 
 The simplest example of a non abelian nilpotent Lie group  is provided by the Heisenberg group  $ \He^n $ whose underlying manifold is $ \C^n \times \R .$ 
The representation theory of $ \He^n $  is very simple and its unitary dual can be described  explicitly. There are certain one dimensional representations of $ \He^n $ which are characters, parametrised by $ \C^n $, but we discard them as they do not play any role in the Plancherel theorem. We let $ \widehat{\He^n} $ to stand for the set of all infinite dimensional irreducible unitary representations of $ \He^n.$ By a theorem of Stone and von Neumann, the members of $ \widehat{\He^n} $ are parametrised by non zero real numbers $ \R^\ast.$ Thus to each $ \lambda \in \R^\ast $ we have an irreducible unitary representation $ \pi_\lambda $ realised on the same Hilbert space $ L^2(\R^n).$ Thus the Fourier transform of $ f \in L^1(\He^n) $ is the operator valued function
 $$  \hat{f}(\lambda) = \int_{\He^n} f(g) \pi_\lambda(g) dg $$
 defined on $ \R^\ast.$ This operator valued  Fourier transform satisfies all the standard properties of a Fourier transform. Hence for suitable functions we have an inversion formula and for $ f \in L^1 \cap L^2(\He^n) $ we have the Plancherel formula:
 \begin{equation}
 \int_{\He^n} |f(g)|^2 \, dg = (2\pi)^{-n-1} \int_{-\infty}^\infty \| \hat{f}(\lambda)\|_{HS}^2 \, |\lambda|^n \, d\lambda.
 \end{equation}
This allows us to extend the Fourier transform to the whole of $ L^2(\He^n) $ as a unitary operator onto $ L^2(\R^\ast, \mathcal{S}_2, d\mu),$ the $ L^2 $ space of Hilbert-Schmidt operator valued functions on $ \R^\ast$ taken with respect to the measure $ d\mu(\lambda) = (2\pi)^{-n-1} |\lambda|^n d\lambda.$\\

Though the Fourier transform defined above shares many features  with its abelian counterparts, it is unwieldy and not suitable for studying several standard problems in harmonic analysis. For example, there are no simple descriptions of the images either of the Schwartz space $ \mathcal{S}(\He^n) $ or $ C_0^\infty(\He^n) $ under the Fourier transform. Consequently, it is not clear how to define Fourier transforms of distributions on the Heisenberg group. Though there are versions of Paley-Wiener theorems for the so called Fourier-Weyl transform, the results are not very satisfactory, see \cite{ST}, \cite{ST2}.\\

Recently the authors of the paper \cite{BCD} have attempted with certain degree of success to define a scalar valued Fourier transform on $ \He^n.$  The idea is very simple: as $ \hat{f}(\lambda)$ for $ f \in L^1(\He^n) $ is a bounded linear operator on $ L^2(\R^n),$ it is natural to fix an orthonormal basis $ \Phi_\alpha^\lambda,\, \alpha \in \mathbb{N}^n $ for $ L^2(\R^n) $ and consider the map $ f \rightarrow  \langle \hat{f}(\lambda)\Phi_\alpha^\lambda, 
\Phi_\beta^\lambda \rangle $ as a candidate for a scalar valued Fourier transform. In \cite{BCD} the authors have introduced a metric $ d $ on the set $ \widetilde{\He}^n = \mathbb{N}^n \times \mathbb{N}^n \times \R^\ast $ so that the above map is continuous from $ L^1(\He^n) $ into $ \widehat{\He}^n,$ the completion of $ \widetilde{\He}^n $ in the metric $ d.$\\

In this article we propose the following definition of  a scalar valued Fourier transform on $ \He^n $ that shares several properties with the Helgason Fourier transform on noncompact rank one Riemannian symmetric spaces.  In order to define this Fourier transform, which we call Strichartz Fourier transform, we need to set up some notations.  For any $ \delta \geq -1/2 $ we let $ L_k^\delta(t), k \in \mathbb{N}, t \geq 0 $ stand for the Laguerre polynomials of type $ \delta.$  For any $ \lambda \in \R^\ast,$ we define the Laguerre functions by
$$ \varphi_{k,\lambda}^{n-1}(z) = L_k^{n-1}(\frac{1}{2}|\lambda||z|^2) e^{-\frac{1}{4}|\lambda| |z|^2}, \, \, z \in \C^n. $$
Then  $ e_{k,\lambda}^{n-1}(z,t) = e^{i\lambda t}  \varphi_{k,\lambda}^{n-1}(z) $ are joint eigenfunctions of the sublaplacian $ \mathcal{L} $ and $  T = \frac{\partial}{\partial t}$: 
$$ \mathcal{L} e_{k,\lambda}^{n-1}(z,t) = (2k+n)|\lambda|\, e_{k,\lambda}^{n-1}(z,t),\,\, -iT e_{k,\lambda}^{n-1}(z,t) = \lambda \, e_{k,\lambda}^{n-1}(z,t).$$
Let $ \Omega $ stand for the Heisenberg fan which is the union of the rays $ R_k = \{ (\lambda, \tau) \in \R^2: \tau = (2k+n)|\lambda| \} $ for $ k=0,1,2,... $ and the limiting ray $ R_\infty = \{ (0,\tau): \tau \geq 0 \}.$  For each $ a = (\lambda, (2k+n)|\lambda|) \in R_k $ we use the notation $ e_a(z,t) $ in place of  $e_{k,\lambda}^{n-1}(z,t).$  For any $ f \in L^1 \cap L^2(\He^n),$ we define its Strichartz Fourier transform $ \widehat{f}(a,z) $ on $ \Omega \times \C^n $ as follows. For $ a \in R_k, z \in \C^n ,
\lambda \neq 0$ we define
\begin{equation}\label{def-k} \widehat{f}(a,z) = \int_{\He^n} f(w,s) e_a((w,s)^{-1}(z,0)) dw \,ds.\end{equation}
Here $ (w,s)^{-1} =(-w,-s) $ and the group law in $\He^n $ is given in Section 2. For  $ a =(0,\tau) $ coming from the limiting ray $ R_\infty $ we set 
\begin{equation}\label{def-infty} \widehat{f}(0,\tau,z) = (n-1)! 2^{n-1}\,  \int_{\He^n} f(w,s) \frac{J_{n-1}(\sqrt{\tau} |z-w|)}{(\sqrt{\tau} |z-w|)^{n-1}} dw \, ds\end{equation}
where $ J_{n-1} $ is the Bessel function of order $(n-1).$ As a subset of $ \R^2 , \Omega $ inherits the Euclidean metric and topology. We define the normalised Strichartz Fourier transform $  \widetilde{f}(a,z) $ 
by  
$$  \widetilde{f}(a,z) = \frac{k!(n-1)!}{(k+n-1)!}  \widehat{f}(a,z),\,\,\, a \in R_k,\,\,\,    \widetilde{f}(0,\tau,z) = \widehat{f}(0,\tau,z) .$$  For this transform we can prove  the following analogue of Riemann-Lebesgue lemma.\\

\begin{thm}\label{riem-leb} For any  $ f \in L^1(\He^n),$ the normalised Strichartz Fourier transform $  \widetilde{f}(a,z) $ 
is uniformly  continuous   on $ \Omega$  for any $ z \in \C^n $ fixed. Moreover,  $  \widetilde{f}(a,z) $  vanishes at infinity, i.e. $ \widetilde{f}(a,z) \rightarrow 0 $ as $ |a| \rightarrow \infty $ for each $ z \in \C^n $ fixed.
\end{thm}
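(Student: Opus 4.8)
The plan is to bring everything back to the dominated convergence theorem with the single majorant $|f(w,s)|$, once two uniform pointwise bounds and one classical limit formula for Laguerre functions are available. Write $\psi_{k,\lambda}(z)=\frac{k!\,(n-1)!}{(k+n-1)!}\varphi_{k,\lambda}^{n-1}(z)$ for the normalised Laguerre function and $\beta_\tau(z)=(n-1)!\,2^{n-1}\,J_{n-1}(\sqrt{\tau}|z|)(\sqrt{\tau}|z|)^{-(n-1)}$ for the kernel attached to $R_\infty$. The classical inequality $|L_k^{n-1}(t)|e^{-t/2}\le L_k^{n-1}(0)=\frac{(k+n-1)!}{k!(n-1)!}$ for $t\ge 0$ gives $|\psi_{k,\lambda}(z)|\le 1$ for all $k,\lambda,z$, and Poisson's integral representation of $J_{n-1}$ (the order $n-1$ being $\ge -1/2$) gives $|\beta_\tau(z)|\le 1$. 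By the group law the $\C^n$-component of $(w,s)^{-1}(z,0)$ is $z-w$ and its $\R$-component is a real number $c(w,s,z)$ independent of $a$, so for $a=(\lambda,(2k+n)|\lambda|)\in R_k$ the integrand in \eqref{def-k} is $f(w,s)\,e^{i\lambda c(w,s,z)}\varphi_{k,\lambda}^{n-1}(z-w)$, whence $|\widetilde f(a,z)|\le\|f\|_1$ for every $a\in\Omega$ and $z\in\C^n$. Finally $\tau\le|a|\le\sqrt{2}\,\tau$ on each $R_k$ and $|a|=\tau$ on $R_\infty$, so $|a|\to\infty$ on $\Omega$ is the same as $\tau\to\infty$.

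\emph{Continuity in $a$.} Fix $z$. At a point $a_0\in R_k$ with $\lambda_0\neq 0$ there is a ball in $\Omega$ about $a_0$ meeting only $R_k$, and along $R_k$ the integrand is continuous in $\lambda$ and dominated by $|f(w,s)|$, so dominated convergence gives continuity at $a_0$. At a point $a_0=(0,\tau_0)\in R_\infty$ take any $a_j\to a_0$ in $\Omega$. If $a_j\in R_\infty$ eventually, dominated convergence in \eqref{def-infty} applies. Otherwise $a_j=(\lambda_j,(2k_j+n)|\lambda_j|)\in R_{k_j}$ with $\lambda_j\to 0$ and $(2k_j+n)|\lambda_j|\to\tau_0$, which forces $k_j\to\infty$ when $\tau_0>0$; then the key input is Szeg\H{o}'s limit $k^{-(n-1)}L_k^{n-1}(x/k)\to x^{-(n-1)/2}J_{n-1}(2\sqrt{x})$, uniform on compact $x$-sets. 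Since $k_j\cdot\tfrac12|\lambda_j||z-w|^2\to\tfrac14\tau_0|z-w|^2$, $\frac{k_j!\,(n-1)!}{(k_j+n-1)!}k_j^{\,n-1}\to(n-1)!$, $e^{-\frac14|\lambda_j||z-w|^2}\to1$ and $e^{i\lambda_j c(w,s,z)}\to 1$, the integrand tends pointwise to $f(w,s)\beta_{\tau_0}(z-w)$, which is exactly the integrand of $\widetilde f(0,\tau_0,z)$; dominated convergence finishes it, the degenerate case $\tau_0=0$ being covered by $x^{-(n-1)/2}J_{n-1}(2\sqrt{x})\to 1/(n-1)!$ as $x\to0$. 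Hence $a\mapsto\widetilde f(a,z)$ is continuous on $\Omega$.

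\emph{Decay at infinity.} Fix $z$ and let $\tau_j\to\infty$; it is enough to show $\widetilde f(a_j,z)\to 0$ along subsequences, and every such sequence has a subsequence of one of three types. If $a_j\in R_\infty$, then $\sqrt{\tau_j}\,|z-w|\to\infty$ for a.e.\ $w$ and $J_{n-1}(r)/r^{n-1}\to 0$ as $r\to\infty$, so dominated convergence applies. If $a_j\in R_k$ with $k$ fixed, then $|\lambda_j|\to\infty$ and for each $w\neq z$ the polynomial $L_k^{n-1}(\tfrac12|\lambda_j||z-w|^2)$ is overwhelmed by $e^{-\frac14|\lambda_j||z-w|^2}$, so $\varphi_{k,\lambda_j}^{n-1}(z-w)\to 0$ and dominated convergence applies. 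If $a_j\in R_{k_j}$ with $k_j\to\infty$, use the uniform estimate, valid for $\delta\ge 0$,
\[
\frac{k!\,\Gamma(\delta+1)}{\Gamma(k+\delta+1)}\,\bigl|L_k^{\delta}(t)\bigr|\,e^{-t/2}\ \le\ C_\delta\,(1+kt)^{-\delta/2-1/6},\qquad t\ge 0,\ k\ge 1,
\]
which follows from the Hilb/Fej\'er asymptotics in the oscillatory range, Airy behaviour at the turning point $t\asymp 4k$, and exponential decay beyond it. Applying it with $\delta=n-1$ and $t=\tfrac12|\lambda_j||z-w|^2$ and noting that $\tau_j\to\infty$ together with $k_j\to\infty$ forces $k_j|\lambda_j|\to\infty$, hence $k_j t\to\infty$ for every fixed $w\neq z$, we get $\psi_{k_j,\lambda_j}(z-w)\to 0$, and a last use of dominated convergence gives $\widetilde f(a_j,z)\to 0$.

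\emph{Uniform continuity and the main difficulty.} Since $\Omega$ is a closed subset of $\R^2$ it is a proper metric space, so $C_0(\Omega)$ is the uniform closure of $C_c(\Omega)$; as continuous compactly supported functions on a metric space are uniformly continuous and uniform continuity passes to uniform limits, every continuous function on $\Omega$ vanishing at infinity is uniformly continuous, and $a\mapsto\widetilde f(a,z)$ has both properties by the two preceding steps. (Alternatively, $|\widetilde f(a,z)-\widetilde g(a,z)|\le\|f-g\|_1$ reduces matters to $f\in C_c^\infty(\He^n)$, for which equicontinuity can be read off by differentiating the kernels in $a$.) The only genuinely delicate points are at the junction with the limiting ray: continuity there requires the Laguerre-to-Bessel limit with the correct normalisation, and the decay along the rays $R_k$ as $k\to\infty$ requires a Laguerre bound uniform in $k$. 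Both are classical, but keeping the parameters $(k,\lambda,\tau,|z-w|)$ consistent across the oscillatory, turning-point and exponential regimes is where the real care is needed.
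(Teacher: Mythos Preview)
Your argument is correct and broadly parallel to the paper's, but the two diverge in the decay-at-infinity step. For continuity at the limiting ray both you and the paper invoke the Laguerre--Bessel connection: the paper writes the Hilb-type asymptotic $\varphi_{k,\lambda}^{n-1}(z)=(n-1)!\,2^{n-1}\,J_{n-1}(\sqrt{(2k+n)|\lambda|}\,|z|)(\sqrt{(2k+n)|\lambda|}\,|z|)^{-(n-1)}+m_k(\sqrt{|\lambda|}\,|z|)$ with a quantitative error bound $|m_k(t)|\le C(2k+n)^{-(n-1)/2-3/4}$ on bounded $t$-intervals, and then treats the main and error terms separately; you use Szeg\H{o}'s limit formula together with dominated convergence, which is essentially the same mechanism packaged differently. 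For vanishing at infinity the paper first passes, via the $L^1\to L^\infty$ bound, to compactly supported $f$; this lets it invoke the classical Riemann--Lebesgue lemmas for the Hankel transform (on $R_\infty$) and for the Euclidean Fourier transform in $t$ (when $|\lambda|\to\infty$), and restricts the error term $m_k$ to bounded arguments. You instead argue pointwise decay of the kernels directly---the Bessel decay on $R_\infty$, the Gaussian factor when $|\lambda|\to\infty$ with $k$ fixed, and a uniform Laguerre bound $\psi_k^{n-1}(t)\le C_n(1+kt)^{-(n-1)/2-1/6}$ when $k\to\infty$---and close with dominated convergence against $|f|$. Your route avoids the reduction to compactly supported $f$ but costs a global Laguerre estimate (the exponent $-1/6$ you quote is the turning-point rate; the oscillatory regime actually gives the better $-1/4$, so your uniform bound is legitimate). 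Finally, you make explicit what the paper leaves implicit: that uniform continuity follows automatically once continuity and vanishing at infinity are established, since $C_0(\Omega)$ is the uniform closure of $C_c(\Omega)$. One small point to tighten: at $\tau_0=0$ a sequence $a_j\in R_{k_j}$ can have $k_j$ bounded, in which case Szeg\H{o}'s limit does not apply; there one simply uses $\psi_{k,\lambda_j}(z-w)\to\frac{k!(n-1)!}{(k+n-1)!}L_k^{n-1}(0)=1$ as $\lambda_j\to 0$.
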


On  the Heisenberg fan  $ \Omega $ we consider the measure $ \nu $ which is defined by
$$ \int_\Omega \varphi(a) d\nu(a) = (2\pi)^{-2n-1} \,\int_{-\infty}^\infty \Big( \sum_{k=0}^\infty   \varphi(\lambda, (2k+n)|\lambda|) \Big)  |\lambda|^{2n} d\lambda.$$
We can now state the inversion formula and the Plancherel theorem for our Strichartz Fourier transform.

\begin{thm}\label{inv-plan} For any Schwartz class function $ f $ on $ \He^n $ we have the following inversion formula for the Strichartz Fourier transform:
\begin{equation}\label{inverse} f(z,t) = \int_\Omega \int_{\C^n}  \widehat{f}(a,w) e_a((-w,0)(z,t))  dw \, d\nu(a).
\end{equation}
 Moreover, for any $ f \in L^1 \cap L^2(\He^n) $ we have the Plancherel formula
\begin{equation}\label{plancherel} \int_{\He^n} |f(z,t)|^2 dz\, dt =   \int_\Omega \int_{\C^n}  |\widehat{f}(a,w)|^2 dw \,d\nu(a).
\end{equation}
\end{thm}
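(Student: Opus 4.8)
The plan is to reduce both assertions to the spectral theory of the twisted Laplacian (the special Hermite expansions used by Strichartz). Write $f^\lambda(z)=\int_{\R}f(z,s)e^{-i\lambda s}\,ds$ for the partial Fourier transform in the central variable, so that $f(z,t)=(2\pi)^{-1}\int_{\R}e^{i\lambda t}f^\lambda(z)\,d\lambda$ and, by Plancherel on $\R$, $\int_{\R}\|f^\lambda\|_{L^2(\C^n)}^2\,d\lambda=2\pi\,\|f\|_{L^2(\He^n)}^2$. Using the group law of Section~2 to expand $e_a((w,s)^{-1}(z,0))=e^{-i\lambda s}\varphi_{k,\lambda}^{n-1}(z-w)\,e^{\frac{i\lambda}{2}\operatorname{Im}(z\cdot\bar w)}$ and integrating out $s$, the definition \eqref{def-k} becomes, for $a=(\lambda,(2k+n)|\lambda|)\in R_k$,
\[
\widehat f(a,z)=\big(f^\lambda\times_\lambda\varphi_{k,\lambda}^{n-1}\big)(z),
\]
where $\times_\lambda$ denotes $\lambda$-twisted convolution on $\C^n$; equivalently $\widehat f(a,\cdot)=(2\pi)^n|\lambda|^{-n}P_k^\lambda f^\lambda$, where $P_k^\lambda$ is the projection onto the $k$-th eigenspace of the $\lambda$-twisted Laplacian. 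Thus the Strichartz transform on the rays is nothing but the sequence of special Hermite projections of $f^\lambda$, and the limiting ray $R_\infty$ of \eqref{def-infty} does not enter: the measure $\nu$ is supported on $\bigcup_k R_k$ and charges $R_\infty$ nothing.

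Next I would feed this into the standard special Hermite calculus: the reproducing identity $f^\lambda=(2\pi)^{-n}|\lambda|^n\sum_{k\ge 0}f^\lambda\times_\lambda\varphi_{k,\lambda}^{n-1}$ with $L^2$-orthogonal summands, together with $\varphi_{j,\lambda}^{n-1}\times_\lambda\varphi_{k,\lambda}^{n-1}=\delta_{jk}(2\pi)^n|\lambda|^{-n}\varphi_{k,\lambda}^{n-1}$ and associativity of $\times_\lambda$. Since $e_a((-w,0)(z,t))=e^{i\lambda t}\varphi_{k,\lambda}^{n-1}(z-w)\,e^{\frac{i\lambda}{2}\operatorname{Im}(z\cdot\bar w)}$ differs from $e^{i\lambda t}\varphi_{k,\lambda}^{n-1}(z-w)$ only by the twisted-convolution phase, integrating in $w$ gives
\[
\int_{\C^n}\widehat f(a,w)\,e_a((-w,0)(z,t))\,dw
=e^{i\lambda t}\big(\widehat f(a,\cdot)\times_\lambda\varphi_{k,\lambda}^{n-1}\big)(z)
=e^{i\lambda t}\,(2\pi)^{2n}|\lambda|^{-2n}\,P_k^\lambda f^\lambda(z).
\]
Summing over $k$ and integrating against $d\nu(a)=(2\pi)^{-2n-1}|\lambda|^{2n}\,d\lambda$, the powers of $|\lambda|$ cancel and the constants collapse to $(2\pi)^{-1}$, while $\sum_k P_k^\lambda f^\lambda=f^\lambda$; hence the right-hand side of \eqref{inverse} equals $(2\pi)^{-1}\int_{\R}e^{i\lambda t}f^\lambda(z)\,d\lambda=f(z,t)$, which is the inversion formula. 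For \eqref{plancherel} I would repeat the computation with $|\widehat f(a,w)|^2$ in place of the integrand: $\int_{\C^n}|\widehat f(a,w)|^2\,dw=(2\pi)^{2n}|\lambda|^{-2n}\|P_k^\lambda f^\lambda\|_{L^2(\C^n)}^2$, so integrating against $d\nu$ and using orthogonality of the $P_k^\lambda$ gives $(2\pi)^{-1}\int_{\R}\|f^\lambda\|_{L^2(\C^n)}^2\,d\lambda=\|f\|_{L^2(\He^n)}^2$ by the $\R$-Plancherel identity above.

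I expect the difficulty to be bookkeeping rather than conceptual. One must fix the normalisations of the special Hermite functions, of $\times_\lambda$, and the exact sign of $\lambda$ in it so that $\widehat f(a,\cdot)\times_\lambda\varphi_{k,\lambda}^{n-1}$ really is $(2\pi)^n|\lambda|^{-n}\widehat f(a,\cdot)$ and not an off-diagonal operator; consistency with the group law of Section~2 then pins down the constant $(2\pi)^{-2n-1}$ in $\nu$ as the product of the factor $(2\pi)^{-n}|\lambda|^n$ from the special Hermite reproducing formula and the $(2\pi)^{-1}$ of Euclidean Fourier inversion. The remaining point requiring care is the interchange of $\sum_k$ with $\int_{\R}d\lambda$ in the inversion step: for $f\in\mathcal S(\He^n)$ the function $f^\lambda$ is Schwartz in $z$ with seminorms integrable in $\lambda$, and the special Hermite series of $f^\lambda$ converges rapidly (using decay of the Laguerre coefficients in powers of $(2k+n)|\lambda|$ together with $f$ lying in the domain of all powers of $\mathcal L$), so dominated convergence applies; for \eqref{plancherel} the integrand is nonnegative, so Tonelli's theorem suffices and $f\in L^1\cap L^2(\He^n)$ is the natural hypothesis there.
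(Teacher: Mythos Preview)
Your proposal is correct and follows essentially the same route as the paper: both arguments identify $\widehat f(a,\cdot)$ with the $k$-th special Hermite projection of the central partial Fourier transform, invoke the Laguerre idempotent relation $\varphi_{k,\lambda}^{n-1}\ast_\lambda\varphi_{k,\lambda}^{n-1}=(2\pi)^n|\lambda|^{-n}\varphi_{k,\lambda}^{n-1}$ to compute the inner integral, and then sum over $k$ and integrate in $\lambda$ using the special Hermite expansion \eqref{spl-her-exp}/\eqref{spl-her-plan} together with Euclidean Fourier inversion/Plancherel. The only discrepancy is a harmless sign convention (your $f^\lambda$ is the paper's $f^{-\lambda}$, so your $\times_\lambda$ is the paper's $\ast_{-\lambda}$), which you already flag, and you are in fact more explicit than the paper about the dominated convergence/Tonelli justifications.
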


It is not true that the Strichartz Fourier transform takes $ L^2(\He^n) $ onto $ L^2(\Omega \times \C^n, d\nu\, dw).$ This is because $ \widehat{f}(a,z) $ has to satisfy a  necessary condition (see  \eqref{necessary}) which we write as
$$ (2\pi)^{-n} |\lambda|^n\,   \varphi_{k,\lambda}^{n-1} \ast_\lambda  \widehat{f}(a,\cdot)(z)=   \widehat{f}(a,z),\,\,\, a \in R_k.$$
If we let $ L^2_0(\Omega \times \C^n, d\nu\, dw) $ stand for the subspace of $ L^2(\Omega \times \C^n, d\nu\, dw)$ consisting of functions $ F(a,z) $ satisfying the above condition, we can prove the following result.

\begin{thm}\label{unitary} The Strichartz Fourier transform initially defined on  $L^1 \cap L^2(\He^n) $  can be extended as a unitary operator from $ L^2(\He^n) $ onto $ L^2_0(\Omega \times \C^n, d\nu\, dw).$
\end{thm}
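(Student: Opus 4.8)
My plan is to read off the isometry from the Plancherel identity of Theorem~\ref{inv-plan}, recognise $L^2_0(\Omega\times\C^n,d\nu\,dw)$ as a closed subspace containing the range, and then exhibit the inverse map explicitly via the special Hermite (Laguerre) synthesis. First, the Plancherel formula \eqref{plancherel} says exactly that $f\mapsto\widehat{f}$ is norm preserving from $L^1\cap L^2(\He^n)$, carrying the $L^2(\He^n)$ norm, into $L^2(\Omega\times\C^n,d\nu\,dw)$; since $L^1\cap L^2(\He^n)$ is dense in $L^2(\He^n)$, this map has a unique isometric extension $\mathcal{F}$ to all of $L^2(\He^n)$. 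Next I would observe that the condition \eqref{necessary} says, for $a=(\lambda,(2k+n)|\lambda|)\in R_k$, that $\widehat{f}(a,\cdot)$ is a fixed vector of the map $g\mapsto(2\pi)^{-n}|\lambda|^n\,\varphi_{k,\lambda}^{n-1}\ast_\lambda g$ on $L^2(\C^n)$; this map is the orthogonal projection onto the $k$-th Laguerre eigenspace of the special Hermite operator, a self-adjoint idempotent obeying the orthogonality relations $\varphi_{j,\lambda}^{n-1}\ast_\lambda\varphi_{k,\lambda}^{n-1}=(2\pi)^n|\lambda|^{-n}\delta_{jk}\,\varphi_{k,\lambda}^{n-1}$. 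Consequently $L^2_0(\Omega\times\C^n,d\nu\,dw)$ is a measurable field of ranges of orthogonal projections, hence a closed subspace; since every $\widehat{f}$ with $f\in L^1\cap L^2$ lies in it, so does the range of $\mathcal{F}$.

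It remains to prove that $\mathcal{F}$ is onto $L^2_0$. Recalling from the analysis behind Theorem~\ref{inv-plan} that, writing $f^\lambda(z)=\int_\R f(z,s)\,e^{-i\lambda s}\,ds$ for the partial Fourier transform in the central variable, one has $\widehat{f}(a,\cdot)=\varphi_{k,\lambda}^{n-1}\ast_\lambda f^\lambda$ (up to the normalisation fixed above) for $a\in R_k$, I would, given $F\in L^2_0$, put $F_{k,\lambda}:=F\big((\lambda,(2k+n)|\lambda|),\cdot\big)$ and define $f$ by prescribing its partial Fourier transform through the Laguerre synthesis
$$ f^\lambda(z)=(2\pi)^{-n}|\lambda|^n\sum_{k=0}^\infty F_{k,\lambda}(z). $$
Because each $F_{k,\lambda}$ already lies in the range of the $k$-th projection, the orthogonality relations above give $\varphi_{k,\lambda}^{n-1}\ast_\lambda f^\lambda=F_{k,\lambda}$ for every $k$, that is $\widehat{f}=F$; and since the pieces $F_{k,\lambda}$ sit in mutually orthogonal subspaces of $L^2(\C^n)$, the Plancherel theorem for the Euclidean Fourier transform in the central variable together with the definition of $\nu$ yields
$$ \|f\|_{L^2(\He^n)}^2=(2\pi)^{-2n-1}\int_{-\infty}^\infty\Big(\sum_{k=0}^\infty\|F_{k,\lambda}\|_{L^2(\C^n)}^2\Big)|\lambda|^{2n}\,d\lambda=\|F\|_{L^2(\Omega\times\C^n,d\nu\,dw)}^2<\infty, $$
so $f$ is a genuine element of $L^2(\He^n)$. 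This gives surjectivity and, with the first step, shows $\mathcal{F}$ is unitary from $L^2(\He^n)$ onto $L^2_0(\Omega\times\C^n,d\nu\,dw)$.

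The point that needs care — and which I expect to be the main obstacle — is the justification of the formal manipulations in the surjectivity step: interchanging the infinite Laguerre sum with the twisted convolution and with the partial Fourier transform, the $L^2(\C^n)$-convergence of $\sum_k F_{k,\lambda}$ for almost every $\lambda$, and the reconstruction of a bona fide function $f\in L^2(\He^n)$ from the family $\{f^\lambda\}$. I would dispose of these by first carrying out the construction for the dense subclass of $F\in L^2_0$ supported on finitely many rays $R_k$ and on a compact subset of $\R^\ast$ in the $\lambda$-variable, where all sums are finite and every interchange is elementary, and then passing to the general case using the isometry already in hand. (As an alternative to the explicit construction, one could prove instead that the range of $\mathcal{F}$ is dense in $L^2_0$: a $G\in L^2_0$ orthogonal to all $\widehat{f}$, $f\in L^1\cap L^2$, must vanish by the inversion formula \eqref{inverse} together with a duality argument — but this route still leans on Theorem~\ref{inv-plan} and is no shorter.)
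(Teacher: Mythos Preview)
Your approach is essentially the same as the paper's: both rely on Theorem~\ref{inv-plan} for the isometry, and both prove surjectivity by defining $f$ through its partial Fourier transform as $(2\pi)^{-n}|\lambda|^n\sum_k F_{k,\lambda}$ and then invoking the orthogonality relations $\varphi_{j,\lambda}^{n-1}\ast_{-\lambda}\varphi_{k,\lambda}^{n-1}=(2\pi)^n|\lambda|^{-n}\delta_{jk}\,\varphi_{k,\lambda}^{n-1}$ to recover $\widehat{f}=F$. One cosmetic point: the paper's convention is $\widehat{f}(a,\cdot)=f^{-\lambda}\ast_{-\lambda}\varphi_{k,\lambda}^{n-1}$, so your synthesis should read $f^{-\lambda}=(2\pi)^{-n}|\lambda|^n\sum_k F_{k,\lambda}$ with $\ast_{-\lambda}$ throughout; this does not affect the argument. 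Your added care about the closedness of $L^2_0$ and the density reduction for the interchange of limits is a welcome elaboration the paper leaves implicit.
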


\begin{rem} For $ f \in L^2(\He^n) $ the function $ f^\lambda $ is defined only for almost every $ \lambda \in \R^\ast $ and hence  we may not be able to define $ \widehat{f}(a,z) $ at every $ a \in \Omega.$ In particular, $ \widehat{f}(0,\tau,z) $ need not be defined. As the limiting ray $ R_\infty $ has zero Plancherel measure, this does not create any problem.
\end{rem}

As  a consequence of the estimates satisfied by the Laguerre functions $ \varphi_{k,\lambda}^{n-1}(z) $ and the Bessel function $ J_{n-1}(t),$  the  Strichartz Fourier transform of any $ f \in L^1(\He^n) $ satisfies the estimate
$$ \sup_{(a,z) \in \Omega \times \C^n} |\widetilde{f}(a,z)| \leq c_{n,1}\, \|f\|_1 .$$
Restating the Plancherel theorem theorem in terms of  $ \widetilde{f} $  and interpolating  with change of measures we can prove the following Hausdorff-Young inequality.

\begin{thm}\label{haus-young} For $ 1 \leq p \leq 2 $ there is a measure $ \nu_p $ on $ \Omega $ such that for all  $ f \in L^p(\He^n)$ we have 
$$ \Big( \int_\Omega \int_{\C^n}  |\widetilde{f}(a,w)|^{p^\prime} dw \,d\nu_p(a) \Big)^{1/p^\prime} \leq  c_{n,p}\, \Big(\int_{\He^n} |f(z,t)|^p dz\, dt \Big)^{1/p}.$$\\
\end{thm}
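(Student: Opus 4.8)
The plan is to deduce Theorem~\ref{haus-young} by interpolating, via the Stein--Weiss interpolation theorem with change of measure, between the endpoint $p=1$ and the endpoint $p=2$. For the $p=2$ endpoint one rewrites the Plancherel identity \eqref{plancherel} in terms of the normalized transform: since $\widetilde f(a,z)=\tfrac{k!(n-1)!}{(k+n-1)!}\widehat f(a,z)$ for $a\in R_k$, \eqref{plancherel} becomes
\[
\int_{\He^n}|f(z,t)|^2\,dz\,dt=\int_\Omega\int_{\C^n}|\widetilde f(a,w)|^2\,dw\,d\tilde\nu(a),
\]
where $\tilde\nu$ is obtained from $\nu$ by multiplying, on the ray $R_k$, by the factor $\big(\tfrac{(k+n-1)!}{k!(n-1)!}\big)^2$; because this factor grows only polynomially in $k$ while $\nu$ restricted to $R_k$ is the $\sigma$-finite measure $(2\pi)^{-2n-1}|\lambda|^{2n}\,d\lambda$, the measure $\tilde\nu$ is again $\sigma$-finite. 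Hence $T\colon f\mapsto\widetilde f$ is, up to a constant, an isometry of $L^2(\He^n)$ into $L^2(\Omega\times\C^n,\,d\tilde\nu\,dw)$. The $p=1$ endpoint is precisely the uniform estimate stated just before the theorem, $\|\widetilde f\|_{L^\infty(\Omega\times\C^n)}\le c_{n,1}\,\|f\|_1$, and this $L^\infty$ norm coincides with the one taken against any full-support measure, in particular against $d\tilde\nu\,dw$.

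Both estimates concern the same operator $T$, which is well defined on the common dense subspace $L^1\cap L^2(\He^n)$ (indeed on $\mathcal S(\He^n)$), so interpolation of $T$ is legitimate. I would apply the Stein--Weiss theorem to the couples $\big(L^1(\He^n),L^2(\He^n)\big)$ and $\big(L^\infty(\Omega\times\C^n,d\tilde\nu\,dw),\,L^2(\Omega\times\C^n,d\tilde\nu\,dw)\big)$ with parameter $\theta=2/p'$, so that $\tfrac1p=(1-\theta)+\tfrac\theta2$ and the target exponent is $q_\theta=2/\theta=p'$. This produces a bounded map $T\colon L^p(\He^n)\to L^{p'}(\Omega\times\C^n,\,d\nu_p\,dw)$, where $\nu_p$ is the geometric-mean measure on $\Omega$ of the two endpoint target measures; since the $L^\infty$ endpoint enters this geometric mean with exponent $(1-\theta)/q_0=0$, one in fact has $d\nu_p=d\tilde\nu$ for every $p\in[1,2]$. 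The $w$-variable is integrated against Lebesgue measure at both endpoints and therefore remains so after interpolation, matching the statement, and the operator norm is bounded by $c_{n,1}^{\,1-\theta}$ times the $\theta$-th power of the $L^2$ constant, which furnishes an admissible $c_{n,p}$.

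The substance of the argument lies entirely in the two endpoint estimates, which are already in hand (Theorem~\ref{inv-plan} and the $L^1$--$L^\infty$ bound), so the only point demanding care is the change of measure: one must carry the Laguerre normalization factor $\tfrac{k!(n-1)!}{(k+n-1)!}$ correctly through the passage from \eqref{plancherel} to the Plancherel identity for $\widetilde f$, and verify that $\tilde\nu$ is $\sigma$-finite so that Stein--Weiss applies. If one instead prefers to phrase Hausdorff--Young for the unnormalized transform $\widehat f$, the identical interpolation --- now with the weighted endpoint bound $\big\|\tfrac{k!(n-1)!}{(k+n-1)!}\widehat f\big\|_{L^\infty}\le c_{n,1}\|f\|_1$ at $p=1$ and \eqref{plancherel} at $p=2$ --- yields the genuinely $p$-dependent measure $d\nu_p=\big(\tfrac{k!(n-1)!}{(k+n-1)!}\big)^{p'-2}d\nu$ on $R_k$, consistent with the normalized version above.
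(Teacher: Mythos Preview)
Your proof is correct and follows essentially the same route as the paper: establish the $L^1\to L^\infty$ bound for $\widetilde f$, rewrite the Plancherel identity of Theorem~\ref{inv-plan} as an $L^2$ isometry for $\widetilde f$ against the reweighted measure (your $\tilde\nu$, the paper's $\nu_2$), and then apply Stein--Weiss interpolation with change of measures between these two endpoints. The only difference is cosmetic---the paper records an auxiliary measure $\nu_1$ at the $L^\infty$ endpoint and leaves the interpolated $\nu_p$ unspecified, whereas you observe that the $L^\infty$ weight enters the Stein--Weiss formula with exponent $(1-\theta)/q_0=0$ and hence $\nu_p=\tilde\nu$ for all $p$.
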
 

Let us compare the definition \eqref{def-k} with the Euclidean Fourier transform and the Helgason Fourier transform on noncompact rank one Riemannian symmetric spaces $ X = G/K.$  For functions $ f $ on $ \R^n $ we can write the Fourier transform in the form
\begin{equation}\label{e-fouri} \widehat{f}(\lambda,\omega) = (2\pi)^{-n/2} \int_{\R^n}  f(x) e^{-i\lambda x \cdot \omega} dx,\,\,\, \lambda \in \R, \omega \in S^{n-1}.\end{equation}
Note that the functions $e_{\lambda,\omega}(x) = e^{-i\lambda x \cdot \omega}$ are eigenfunctions of the Laplacian $ \Delta $ and $ \R^n$ and  the inversion formula which reads as 
\begin{equation}\label{e-fouri-in} f(x) = (2\pi)^{-n/2} \int_{-\infty}^\infty \int_{S^{n-1}} \widehat{f}(\lambda,\omega) e_{\lambda,\omega}(x) |\lambda|^{n-1} d\omega\, d\lambda \end{equation}
expresses $ f $ as a superposition of the eigenfunctions $ e_{\lambda,\omega}.$ As $ e_a(z,t) $ are eigenfunctions of $ \mathcal{L} $  the similarity between \eqref{inverse} and \eqref{e-fouri-in} is clear. It is even more illuminating to compare our Fourier transform with the Helgason Fourier transform.\\

Let $ X = G/K $ be a rank one symmetric space of noncompact type. The Helgason Fourier transform  $ \widetilde{f}(\lambda,b) $ of a function $ f $ on $ G/K $ is given by
\begin{equation}\label{h-fouri} 
\widetilde{f}(\lambda,b) = \int_{G/K} f(x) e^{(-i\lambda+\rho)A(x,b)} dx 
\end{equation}
where $ \lambda \in \C $ and $ b \in K/M,$ see Helgason \cite{SH1,SH2} for the notations. The inversion formula is given by
\begin{equation}\label{h-fouri-in}
f(x) = c\, \int_{-\infty}^\infty \int_{K/M} \widetilde{f}(\lambda,b)e^{(i\lambda+\rho)A(x,b)} |c(\lambda)|^{-2} db\, d\lambda 
\end{equation}
where $ db $ is the normalised measure on $ K/M $ and $ c(\lambda) $ is the Harish-Chandra $c$-function. Here again, the functions $ e_{\lambda,b}(x) = e^{(i\lambda+\rho)A(x,b)} $ are eigenfunctions of the Laplace-Beltrami operator $ \Delta_X $ on the symmetric space $ X$ bringing out the analogy between \eqref{def-k} and \eqref{h-fouri}.\\

Since functions $ f $ on $ G/K $ can be considered as right $K$-invariant functions on the Lie group $ G $ we can consider the group Fourier transform of $ f.$ For any $ \lambda \in \C $ if we let $ \pi_\lambda $ stand  for the spherical principal series representations realized on $ L^2(K/M),$  each of these representations has  a unique $ K$-fixed vector $ Y_0 \in L^2(K/M) $ which is the constant function $ Y_0(b) =1.$  The Helgason Fourier transform of $ f $ is then related to the group Fourier 
transform via the equation
\begin{equation}\label{group-v-helga} \widetilde{f}(\lambda,b) = \pi_\lambda(f)Y_0(b),\,\,\, \pi_\lambda(f) = \int_G f(g) \pi_\lambda(g) dg.
\end{equation}
We will show that there is a similar relation in the case of Strichartz Fourier transform. The Heisenberg group $ \He^n $ can be considered as a subgroup of a bigger group $ G_n:=\mathbb{H}^n \ltimes U(n)$ known as the Heisenberg motion group, so that functions on $ \He^n $ are precisely the right $U(n)$-invariant functions on $ G_n.$ For each $ a = (\lambda, (2k+n)|\lambda|) \in R_k $ there is a class one representation $ \rho_k^\lambda $ of $ G_n $ so that
\begin{equation}\label{group-v-stri} \widehat{f}(a,z) = \rho_k^\lambda(f)e_a(z,0),\,\,\, \rho_k^\lambda(f) = \int_{G_n} f(g) \rho_k^\lambda(g) dg.
\end{equation}
We will demonstrate that the Strichartz Fourier transform shares several other properties with the Helgason Fourier transform.\\

We conclude this introduction with a brief description of the plan of the paper. In Section 2,  after recalling the representation theory of $ \He^n $ and the Heisenberg group motion group and the spectral theory of the sublaplacian, we define the Strichartz Fourier transform and  prove all its basic properties. In Section 3 we explore the connection between the Strichartz Fourier transform and the Gelfand transform on the commutative Banach algebra consisting of $ L^1(\He^n) $ functions which are radial in the $z$ variable. We  prove an analogue of  Hecke-Bochner formula for the Strichartz Fourier transform and use it to characterize the image of the Schwartz space under the Strichartz Fourier transform. In Section 4 we describe the connection between the operator valued group Fourier transform and the scalar valued Strichartz Fourier transform and restate the well known Hardy and Ingham theorems which are originally stated in terms of the former, in terms of the new transform.

\section{Fourier transforms on the Heisenberg group}	

\subsection{ Schr\"odinger representations and the group Fourier transform}Let $\mathbb{H}^n:=\mathbb{C}^n\times\mathbb{R}$ be the $(2n+1)$- dimensional Heisenberg group with the group law
	 
	$$(z, t)(w, s):=\big(z+w, t+s+\frac{1}{2}\Im(z.\bar{w})\big),\ \forall (z,t),(w,s)\in \mathbb{H}^n.$$ This is a step two nilpotent Lie group where the Lebesgue measure $dzdt$ on $\mathbb{C}^n\times\mathbb{R}$ serves as the Haar measure. The representation theory of $\mathbb{H}^n$ is well-studied in the literature, see the monographs \cite{F}, \cite{MT} and \cite{ST2}. In order to define Fourier transform, we use the Schr\"odinger representations as described below.  
	
	For each non zero real number $ \lambda $ we have an infinite dimensional representation $ \pi_\lambda $ realised on the Hilbert space $ L^2( \R^n).$ These are explicitly given by
	$$ \pi_\lambda(z,t) \varphi(\xi) = e^{i\lambda t} e^{i\lambda(x \cdot \xi+ \frac{1}{2}x \cdot y)}\varphi(\xi+y),\,\,\,$$
	where $ z = x+iy $ and $ \varphi \in L^2(\R^n).$ These representations are known to be  unitary and irreducible. Moreover, by a theorem of Stone and Von-Neumann, (see e.g., \cite{F})  up to unitary equivalence these account for all the infinite dimensional irreducible unitary representations of $ \mathbb{H}^n $ which act as $e^{i\lambda t}I$ on the center. Also there is another class of finite dimensional irreducible representations. As they  do not contribute to the Plancherel measure  we will not describe them here.\\
	
	The Fourier transform of a function $ f \in L^1(\mathbb{H}^n) $ is the operator valued function   on the set of all nonzero reals $ \R^\ast $ given by
	$$ \hat{f}(\lambda) = \int_{\mathbb{H}^n} f(z,t) \pi_\lambda(z,t)  dz dt .$$  Note that $ \hat{f}(\lambda) $ is a bounded linear operator on $ L^2(\R^n).$ It is known that when $ f \in L^1 \cap L^2(\mathbb{H}^n) $ its Fourier transform  is actually a Hilbert-Schmidt operator and one has
	$$ \int_{\mathbb{H}^n} |f(z,t)|^2 dz dt = (2\pi)^{-(n+1)}\int_{-\infty}^\infty \|\widehat{f}(\lambda)\|_{HS}^2 |\lambda|^n d\lambda  $$
	where $\|.\|_{HS}$ denotes the Hilbert-Schmidt norm. 
	The above allows us to extend  the Fourier transform as a unitary operator between $ L^2(\mathbb{H}^n) $ and the Hilbert space of Hilbert-Schmidt operator valued functions  on $ \R $ which are square integrable with respect to the Plancherel measure  $ d\mu(\lambda) = (2\pi)^{-n-1} |\lambda|^n d\lambda.$ \\
	
	Let $ \mathcal{S}_2 $ stand for the Hilbert space of Hilbert-Schmidt operators on $ L^2(\R^n) $ equipped with the inner product $ (T,S) = \tr(S^\ast T).$ We then have the following Plancherel theorem:
	
	\begin{thm} The group Fourier transform is a unitary operator from $ L^2(\He^n) $ onto $ L^2(\R^\ast, \mathcal{S}_2, d\mu).$
	\end{thm}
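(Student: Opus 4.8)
The plan is to factor the group Fourier transform through the Euclidean Fourier transform in the central variable and the Weyl transform on $\C^n$, prove the isometry on a dense subspace by an explicit kernel computation, and then deal with surjectivity separately using the special Hermite functions.

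First, for $f \in L^1 \cap L^2(\He^n)$ I would introduce the partial Fourier transform in the central variable, $f^\lambda(z) = \int_{\R} f(z,t)\, e^{i\lambda t}\, dt$, and record that $\widehat{f}(\lambda) = W_\lambda(f^\lambda)$, where $W_\lambda(g) = \int_{\C^n} g(z)\, \pi_\lambda(z,0)\, dz$ is the Weyl transform. Writing out $\pi_\lambda(z,0)$ in the Schr\"odinger model and changing variables, $W_\lambda(g)$ is an integral operator on $L^2(\R^n)$ with kernel $K_g^\lambda(\xi,\eta) = \int_{\R^n} g\big(x + i(\eta-\xi)\big)\, e^{\frac{i\lambda}{2}\, x\cdot(\xi+\eta)}\, dx$, which is, up to a linear change of variables, a Euclidean Fourier transform of $g$ in the $x$-variable. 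Hence $g \in L^2(\C^n)$ forces $K_g^\lambda \in L^2(\R^n\times\R^n)$, so $W_\lambda(g)$ is Hilbert--Schmidt; applying the Euclidean Plancherel theorem (first in the variable $\xi+\eta$ after the change of variables $(\xi,\eta)\mapsto(\xi+\eta,\eta-\xi)$, then integrating in $\eta-\xi$) gives $\norm{W_\lambda(g)}_{HS}^2 = (2\pi)^n |\lambda|^{-n}\, \norm{g}_{L^2(\C^n)}^2$.

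Combining this with the one-dimensional Plancherel theorem $\int_{\R}\abs{f(z,t)}^2 dt = (2\pi)^{-1}\int_{\R}\abs{f^\lambda(z)}^2 d\lambda$ and Fubini's theorem, I obtain
\begin{align*}
\int_{\He^n} \abs{f(z,t)}^2\, dz\, dt
&= (2\pi)^{-1}\int_{-\infty}^\infty \int_{\C^n} \abs{f^\lambda(z)}^2\, dz\, d\lambda \\
&= (2\pi)^{-n-1}\int_{-\infty}^\infty \norm{\widehat{f}(\lambda)}_{HS}^2\, \abs{\lambda}^n\, d\lambda ,
\end{align*}
which is the asserted Plancherel identity on the dense subspace $L^1\cap L^2(\He^n)$ of $L^2(\He^n)$; measurability of $\lambda\mapsto\widehat{f}(\lambda)$ and its local boundedness in $\mathcal{S}_2$ come for free from the kernel formula. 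Therefore the group Fourier transform extends uniquely to an isometry $\mathcal{F}\colon L^2(\He^n)\to L^2(\R^\ast,\mathcal{S}_2,d\mu)$; in particular its range is a closed subspace, and it only remains to prove that the range is everything.

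For surjectivity — which I expect to be the real point — I would show that for each fixed $\lambda\in\R^\ast$ the Weyl transform $g\mapsto W_\lambda(g)$ maps $L^2(\C^n)$ onto $\mathcal{S}_2$. Taking $\{\Phi_\alpha^\lambda\}_{\alpha\in\Na^n}$ a Hermite-type orthonormal basis of $L^2(\R^n)$ adapted to $\pi_\lambda$, the special Hermite functions $\Phi_{\alpha\beta}^\lambda(z) = \langle \pi_\lambda(z,0)\Phi_\beta^\lambda, \Phi_\alpha^\lambda\rangle$ satisfy orthogonality relations which, after the normalization $(2\pi)^{-n/2}\abs{\lambda}^{n/2}$, make them an orthonormal basis of $L^2(\C^n)$; and $W_\lambda$ carries them (up to the same constant) to the rank-one operators $\Phi_\beta^\lambda\otimes\overline{\Phi_\alpha^\lambda}$, whose closed span is all of $\mathcal{S}_2$. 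Given any $F\in L^2(\R^\ast,\mathcal{S}_2,d\mu)$, choosing fibrewise the unique $g^\lambda\in L^2(\C^n)$ with $W_\lambda(g^\lambda)=F(\lambda)$ (so $\norm{g^\lambda}_{L^2(\C^n)}^2 = (2\pi)^{-n}\abs{\lambda}^n\norm{F(\lambda)}_{HS}^2$) and then inverting the central Fourier transform $f^\lambda\mapsto f$ produces a function $f\in L^2(\He^n)$ with $\mathcal{F}f = F$. The points needing care are the Weyl--Plancherel constant (equivalently, the explicit orthogonality relations for the $\Phi_{\alpha\beta}^\lambda$) and the measurable-selection step ensuring $\lambda\mapsto g^\lambda$ is measurable, so that $f$ is genuinely a member of $L^2(\He^n)$; both are routine but not entirely formal.
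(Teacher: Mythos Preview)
Your argument is correct and complete. Note, however, that the paper does not actually supply a proof of this theorem: it is stated as a well-known background result, with the Plancherel identity quoted as known and the extension to a unitary operator asserted in one sentence, referring the reader to the monographs \cite{F}, \cite{MT}, \cite{ST2}, \cite{ST3}. The only ingredient the paper records explicitly is the Weyl--Plancherel formula \eqref{wplan}, $\|W_\lambda(g)\|_{HS}^2\,|\lambda|^n = (2\pi)^n\|g\|_2^2$, which it again cites rather than proves.

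Your proof therefore goes considerably beyond what the paper does: you derive the Weyl--Plancherel constant from the explicit integral kernel of $W_\lambda$, and you address surjectivity directly via the special Hermite basis, neither of which the paper attempts. The overall strategy---factor $\widehat{f}(\lambda) = W_\lambda(f^\lambda)$ and combine the Weyl and one-dimensional Euclidean Plancherel theorems---is exactly the one implicit in the paper's discussion surrounding the theorem statement, so there is no divergence in approach, only in the level of detail supplied.
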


	By  polarizing the Plancherel formula   we  obtain the Parseval identity: for $ f, g \in L^2(\He^n) $ 
	$$\int_{\He^n}f(z,t)\overline{g(z,t)}dzdt=\int_{-\infty}^{\infty}tr(\widehat{f}(\lambda)\widehat{g}(\lambda)^*)~d\mu(\lambda).$$ Also for suitable functions $f$ on $\He^n$ we have the following inversion formula
	$$f(z,t)=\int_{-\infty}^{\infty}tr(\pi_{\lambda}(z,t)^*\widehat{f}(\lambda))d\mu(\lambda).$$
	Now from the  definition of $\pi_{\lambda}$  it is easy to see that 
	$\widehat{f}(\lambda)=\int_{\C^n}f^{\lambda}(z)\pi_{\lambda}(z,0)dz $ where 
	$f^{\lambda}$ stands for the inverse Fourier transform of $f$ in the central variable:
	$$f^{\lambda}(z):=\int_{-\infty}^{\infty}e^{i\lambda t}f(z,t)dt.$$
	This suggests that for any $g \in L^1(\C^n) $, we consider the following   operator valued function  
		$$ W_{\lambda}(g):=\int_{\C^n}g(z)\pi_{\lambda}(z,0)dz.
		$$ With these notations we note that  $\hat{f}(\lambda)=W_{\lambda}(f^{\lambda}).$  These transforms are called the Weyl transforms. We have the following Plancherel formula for Weyl transform  (See \cite[(2.2.9), p.  49]{ST3})
		\begin{equation}
			\label{wplan}
			 \|W_{\lambda}(g)\|^2_{HS}|\lambda|^n=(2\pi)^n\|g\|_2^2, ~g\in L^2(\mathbb{C}^n).
		\end{equation}   
		Many results for the group Fourier transform are proved by studying the analogues for the Weyl transform.

\subsection{Joint spectral theory of $ \mathcal{L} $ and $ T$} The Heisenberg Lie algebra $ \mathfrak{h}_n $ has a basis consisting of the left invariant  vector fields
$$ X_j = \frac{\partial}{\partial x_j}+ \frac{1}{2} y_j \frac{\partial}{\partial t},\,\,X_j = \frac{\partial}{\partial y_j}- \frac{1}{2} x_j \frac{\partial}{\partial t},\,\, T =  \frac{\partial}{\partial t},\,\, j =1,2,...,n.$$
The sublaplacian $ \mathcal{L} = -\sum_{j=1}^n \big( X_j^2+Y_j^2\big) $ plays the role of the Laplacian on $ \He^n.$ As $ \mathcal{L} $ commutes with $ T $ there is a well defined joint spectral theory of these two operators.  The functions $ e_{k,\lambda}^{n-1}(z,t) $ in the introduction are joint eigenfunctions of $ \mathcal{L} $ and $ T.$  The operators $ L_\lambda $ defined by the relation $ \mathcal{L}(f(z) e^{i\lambda t}) = e^{i\lambda t} L_\lambda f(z) $ are known as a special Hermite operators and they have explicit spectral decomposition which we describe now.\\

The convolution on $ \He^n $ gives rise to the so called $\lambda$-twisted convolutions by the relation
$$ (f\ast g)^\lambda(z) = \int_{\C^n} f^\lambda(z-w) g^\lambda(w) e^{\frac{i}{2}\lambda \Im(z \cdot \bar{w})} dw =: f^\lambda \ast_\lambda g^\lambda(z).$$
The Laguerre functions $ \varphi_{k,\lambda}^{n-1} $ are eigenfunctions of $ L_\lambda $ with eigenvalues $ (2k+n)|\lambda| $ and every $ g \in L^2(\C^n) $ has the $ L^2 $ convergent expansion
\begin{equation}\label{spl-her-exp}
g(z) = (2\pi)^{-n} \, |\lambda|^n \, \sum_{k=0}^\infty g \ast_\lambda \varphi_{k,\lambda}^{n-1}(z). 
\end{equation}
The eigenspaces of $ L_\lambda $ corresponding to the eigenvalues $ (2k+n)|\lambda| $ are infinite dimensional and an orthonormal basis is provided by 
$$ \Phi_{\alpha,\beta}^\lambda(z) = (2\pi)^{-n/2} (\pi_\lambda(z,0)\Phi_\alpha^\lambda, \Phi_\beta^\lambda),\,\,  \alpha, \beta \in \mathbb{N}^n, |\alpha| =k.$$ 
Here $ \Phi_\alpha^\lambda$ are the scaled Hermite functions which are eigenfunctions of the scaled Hermite operator $ H(\lambda) = -\Delta+\lambda^2 |x|^2 $ with eigenvalues $ (2k+n)|\lambda| .$ We refer to \cite{ST3} for the details. 
The functions $\Phi_{\alpha,\beta}^\lambda,$ known as special Hermite functions, form an orthonormal basis for $ L^2(\C^n) $ and \eqref{spl-her-exp} is the compact form of the special Hermite expansion of a function $ g.$\\

The projections $ (2\pi)^{-n} |\lambda|^n g \ast_\lambda \varphi_{k,\lambda}^{n-1} $ are mutually orthogonal and the Plancherel theorem associated to the expansion \eqref{spl-her-exp} reads as 
\begin{equation}\label{spl-her-plan}
\int_{\C^n} |g(z)|^2 dz  = (2\pi)^{-2n} \, |\lambda|^{2n}\, \sum_{k=0}^\infty \int_{\C^n} |g \ast_\lambda \varphi_{k,\lambda}^{n-1}(z)|^2 dz. 
\end{equation}
Weyl transform converts the twisted convolution into products and we have  the formula
\begin{equation}\label{projection}
(2\pi)^{-n} |\lambda|^n W_\lambda( g \ast_\lambda \varphi_{k,\lambda}^{n-1}) = W_\lambda(g) P_k(\lambda).\,\, 
\end{equation}
Here $ P_k(\lambda) $ is the orthogonal projection of $L^2(\R^n) $ onto the $k$-th eigenspace of the Hermite operator $ H(\lambda) $ spanned by $ \Phi_\alpha^\lambda,  |\alpha|=k.$

\subsection{Heisenberg motion group and some class one representations} Let $U(n)$ denote the group of all unitary matrices of order $n$. This acts on $\mathbb{H}^n$ by the automorphisms $$\sigma.(z,t)=(
\sigma z,t),~\sigma\in U(n).$$  We consider the semi-direct product of $\mathbb{H}^n$ and $U(n)$, $G_n:=\mathbb{H}^n \ltimes U(n)$ which acts on $\mathbb{H}^n$ by  $$(z,t,\sigma).(w,s)=\big(z+\sigma w, t+s+\frac12 \Im( z \cdot \overline{\sigma w})\big)$$  whence  the group law in $G_n$ is given by
$$(z,t,\sigma).(w,s,\tau)=\big(z+\sigma w, t+s+\frac12 \Im( z \cdot \overline{\sigma w}), \sigma \tau \big).$$ The group $G_n$ is called the \textit{Heisenberg motion group} which contains $\mathbb{H}^n
$ and $U(n)$ as subgroups. Also $\mathbb{H}^n$ can be identified with the quotient group $G_n/U(n)$. As a matter of fact, functions on $\mathbb{H}^n$ can be viewed as right $U(n)$ invariant functions on $G_n.$ The Haar measure on $G_n$ is given by $d\sigma\,dz\,dt$ where $d\sigma$ denotes the normalised Haar measure on $U(n)$. To bring out the connection between the group Fourier transform on $\mathbb{H}^n$ and the Heisenberg motion group, we need to describe a family of class one representations of $G_n.$ We start with recalling the definition of such representations. \\

 Let $G$ be a locally compact topological group and $K$ be a compact subgroup of $G$. Suppose $\pi$ is a representation of $G$ realised on the Hilbert space $H$. Let $H_{K}$ denote the set of all $K$-fixed vectors given by 
 $$H_{K}:=\{v\in H: \pi(k)v=v,~\forall k\in K\}.$$ It can be easily checked that $H_K$ is a subspace of $H$.
  We say that $\pi$ is a class one representation of the pair $(G,K)$ if $H_{K}\neq\{0\}.$ Moreover, when $(G, K)$ is a Gelfand pair it is well-known that $\dim H_K=1.$ In the following, we describe certain family of class one representation for the Gelfand pair $(G_n, U(n))$. For that we need to set up some more notations. \\
  
  For $\alpha\in \mathbb{N}^n$ and $\lambda\neq 0$ let $\Phi^{\lambda}_{\alpha}(x):=|\lambda|^{n/4}\Phi_{\alpha}(\sqrt{|\lambda|}x),~x\in \mathbb{R}^n$ where $\Phi_{\alpha}$ denote the normalised Hermite functions on $\mathbb{R}^n.$ We know that for each $\lambda\neq 0$, $\{\Phi_{\alpha}^{\lambda}: \alpha\in\mathbb{N}^n\}$ forms an orthonormal basis for $L^2(\mathbb{R}^n)$. Suppose 
  $$E^{\lambda}_{\alpha,\beta}(z,t):=(\pi_{\lambda}(z,t)\Phi_{\alpha}^{\lambda}, \Phi_{\beta}^{\lambda}),~(z,t)\in \mathbb{H}^n$$ denotes the matrix coefficients of the Schr\"odinger representation $\pi_{\lambda}$ of $\mathbb{H}^n.$  For each $\lambda\neq0$ and $k\in\mathbb{N}$, we consider the Hilbert space $\mathcal{H}^{\lambda}_k$ spanned by $\{E^{\lambda}_{\alpha,\beta}:\alpha, \beta\in\mathbb{N}^n, |\beta|=k\}$
and equipped with the inner product
$$(f,g)_{\mathcal{H}^{\lambda}_k}:=(2\pi)^{-n}|\lambda|^n\int_{\C^n}f(z,0)\overline{g(z,0)}dz.$$ We define a representation $\rho_k^{\lambda}$ of $G_n$ realised on $\mathcal{H}^{\lambda}_k$ by the prescription
$$\rho_k^{\lambda}(z,t,\sigma)\varphi(w,s):=\varphi((z,t,\sigma)^{-1}(w,s)),~(w,s)\in \mathbb{H}^n.$$ It is well-known that $\rho_k^{\lambda}$ is an irreducible unitary representation of $G_n$ for all $\lambda\neq0$ and $k\in\mathbb{N}.$ Also for $\lambda\neq 0$ and $k\in \mathbb{N}$ we consider the function $e_{k,\lambda}^{n-1}$ on $\mathbb{H}^n$ defined by 
$$e_{k,\lambda}^{n-1}(z,t)= \sum_{|\alpha|=k}(\pi_{\lambda}(z,t)\Phi_{\alpha}^{\lambda}, \Phi_{\alpha}^{\lambda}).$$ It is known that the above function can be expressed in terms of Laguerre functions as follows (See \cite[p. 52]{ST3})
$$e_{k,\lambda}^{n-1}(z,t)= e^{i\lambda t}\varphi_{k,\lambda}^{n-1}(z).$$
It can be checked that $e_{k,\lambda}^{n-1}$ is a $U(n)$-fixed vector corresponding to the representation $\rho_k^{\lambda}$ and hence $\rho_k^{\lambda}$ is a class one representation of the pair  $(G_n, U(n))$. Moreover,  $(G_n, U(n))$ being a Gelfand pair, $e_{k,\lambda}^{n-1}$ is unique up to scalar multiple.\\

The representations $ \rho_k^\lambda $ when restricted to $ \He^n $ are not irreducible but split into finitely many irreducible unitary representations each one being equivalent to $ \pi_\lambda.$ Given $f\in L^1(\mathbb{H}^n)$, considering it as an $U(n)$-invariant function on $G_n$, we associate an operator valued function $\rho_k^{\lambda}(f)$ acting on $\mathcal{H}^{\lambda}_k$ defined by 
$$\rho_k^{\lambda}(f):=\int_{G_n}f(z,t)\rho_k^{\lambda}(z,t,\sigma)d\sigma\,dz\,dt.$$ 
Now since $\rho_k^{\lambda}$ is unitary, it can be easily checked that $\rho_k^{\lambda}(f)$ is a bounded operator and the operator norm is bounded above by $\|f\|_1.$  From the definition of $\rho_k^{\lambda}$ the following can be easily checked: 
\begin{equation}\label{spl-her-pro} \rho^{\lambda}_{k}(f)e^{n-1}_{k,\lambda}(z,t)= e^{i \lambda t} f^{-\lambda} \ast_{-\lambda} \varphi_{k,\lambda}^{n-1}(z).
\end{equation} 
  This leads to the  Plancherel  formula, proved in \cite[Proposition 2.1]{RRT}, for the representations $ \rho_k^\lambda.$ 
\begin{equation}\label{norm-rho-k}\frac{(k+n-1)}{k! (n-1)!} \| \rho_k^\lambda(f)\|_{HS}^2  = (2\pi)^{-n} |\lambda|^n \,  \int_{\C^n} |f^{-\lambda}\ast_{-\lambda}\varphi^{n-1}_{k,\lambda}(z)  |^2 dz. \end{equation}
It is easy to see that $ \rho^{\lambda}_{k}(f)e^{n-1}_{k,\lambda}(z,t)$ are eigenfunctions of the sublaplacian  $\mathcal{L} $ with eigenvalues $ (2k+n)|\lambda| $ and $ f $ can be recovered  by the formula
\begin{equation}\label{inv}
f(z,t) = (2\pi)^{-n-1}\, \int_{-\infty}^\infty   e^{i\lambda t}\,  \big(\sum_{k=0}^\infty  \rho_k^\lambda(f)e^{n-1}_{k,\lambda}(z,0) \big) |\lambda|^n d\lambda.
\end{equation}
This is a consequence of the special Hermite expansion \eqref{spl-her-exp} applied to $ f^\lambda.$ We can thus view the above as the spectral decomposition of the sublaplacian.\\

\subsection{Strichartz  Fourier transform on the Heisenberg group} We propose the following definition as a scalar valued Fourier transform for functions on the Heisenberg group.  Let $ \Omega $ stand for the Heisenberg fan which is the union of the rays $ R_k = \{ (\lambda, \tau) \in \R^2: \tau = (2k+n)|\lambda| \} $ for $ k=0,1,2,... $ and the limiting ray $ R_\infty = \{ (0,\tau): \tau \geq 0 \}.$  For any $ f \in L^1 \cap L^2(\He^n),$ we define its Strichartz Fourier transform $ \widehat{f}(a,z) $ for $ a \in R_k, z \in \C^n $ by the relation
$$  \widehat{f}(a,z) =   f^{-\lambda} \ast_{-\lambda} \varphi_{k,\lambda}^{n-1}(z) ,\,\,\, a = (\lambda, (2k+n)|\lambda|).$$
In view of the relation \eqref{spl-her-pro}, namely $ \rho_k^\lambda(f)e_{k,\lambda}^{n-1}(z,t)= e^{i\lambda t} f^{-\lambda} \ast_{-\lambda} \varphi_{k,\lambda}^{n-1}(z)$ we see that
$$  \widehat{f}(a,z) = e^{-i\lambda t} \, \rho_k^\lambda(f)e_{k,\lambda}^{n-1}(z,t) =  \rho_k^\lambda(f)e_{k,\lambda}^{n-1}(z,0).$$
For  $ a =(0,\tau) $ coming from the limiting ray $ R_\infty $ we set 
$$ \widehat{f}(0,\tau,z) = (n-1)! 2^{n-1} \, \int_{\C^n} f^0(z-w) \frac{J_{n-1}(\sqrt{\tau} |w|)}{(\sqrt{\tau} |w|)^{n-1}} dw $$ 
where $ J_{n-1} $ is the Bessel function of order $(n-1).$ At $ (0,0,z) $ we let $ \widehat{f}(0,0,z) = \int_{\He^n} f(w,t) dw dt.$ 
As a subset of $ \R^2 , \Omega $ inherits the Euclidean metric and topology.\\

We define the normalised Strichartz Fourier transform by  $ \widetilde{f}(a,z) = \frac{k!(n-1)!}{(k+n-1)!} \widehat{f}(a,z) $ for $ a \in R_k.$ On the limiting ray we simply set $ \widetilde{f}(0,\tau,z) = \widehat{f}(0,\tau,z).$  It then follows that   $  \widetilde{f} $ is a continuous  function on $ \Omega  $ for $ f \in L^1(\He^n).$\\

{\bf Proof of Theorem \ref{riem-leb}:}  The only nontrivial part which requires a proof is  the continuity at $ (0,\tau,z) $  when a sequence  $ a_j \in \Omega $ converges to $ (0,\tau) $ running through different rays $ R_k.$ Thus we have $ (\lambda_j, (2k_j+n)|\lambda_j|) $ with $ \lambda_j \rightarrow 0 $ and $ (2k_j+n) |\lambda_j| \rightarrow \tau.$ We need to show that
$$ \lim_{j \rightarrow \infty} \frac{k_j! (n-1)!}{(k_j+n-1)!} \, f^{-\lambda_j} \ast_{-\lambda_j} \varphi_{k_j,\lambda_j}^{n-1}(z) = (n-1)! 2^{n-1} \, \int_{\C^n} f^0(z-w) \frac{J_{n-1}(\sqrt{\tau} |w|)}{(\sqrt{\tau} |w|)^{n-1}} dw.$$ 
This can be proved by using  asymptotic properties of the Laguerre functions $ \varphi_{k,\lambda}^{n-1}(z).$  From \cite[(8.22.4),\, p.193]{GS} we have
\begin{equation} \label{asymp-lag} \varphi_{k,\lambda}^{n-1}(z) = (n-1)! 2^{n-1} \frac{ J_{n-1}(\sqrt{(2k+n)|\lambda|}\,|z|)}{(\sqrt{(2k+n)|\lambda|}\,|z|)^{n-1}} + m_k( \sqrt{|\lambda|} |z|) 
\end{equation}
where the error term satisfies the uniform estimates 
\begin{equation}\label{error-esti} |m_k(t)| \leq C\, (2k+n)^{-(n-1)/2-3/4},\,\,\, 0 < t \leq b. \end{equation}   Thus we see that $ \widetilde{f}(a_j,z) $ is a sum of two terms of which the main term is given by
$$ (n-1)! 2^{n-1} \, \int_{\C^n} f^{\lambda_j}(z-w)\, \frac{ J_{n-1}(\sqrt{(2k_j+n)|\lambda_j|}\,|w|)}{(\sqrt{(2k_j+n)|\lambda_j|}\,|w|)^{n-1}}  \, e^{\frac{i}{2} \lambda_j \Im(z \cdot \bar{w})} \, dw.$$
As $ \lambda_j \rightarrow 0 $ and $ (2k_j+n) |\lambda_j| \rightarrow \tau,$ it is clear that the above converges to 
$$ (n-1)! 2^{n-1} \, \int_{\C^n} f^0(z-w) \frac{J_{n-1}(\sqrt{\tau} |w|)}{(\sqrt{\tau} |w|)^{n-1}} dw.$$ 
As $ \lambda_j $ remains bounded, the estimates on $ m_k(\sqrt{|\lambda|} \,|z-w|) $ shows that the error term goes to zero proving our claim. \\

In order to prove that $ \widetilde{f}(a,z) $ vanishes at infinity, we first observe that the  Strichartz Fourier transform of any $ f \in L^1(\He^n) $ satisfies the estimate
\begin{equation}\label{bound for L one} \sup_{(a,z) \in \Omega \times \C^n} |\widetilde{f}(a,z)| \leq c_{n,1}\, \|f\|_1 .\end{equation} This is a consequence of the well known fact that the Laguerre functions $ \varphi_{k,\lambda}^{n-1}(z) $ and the Bessel function $ J_{n-1}(t)$  satisfy the uniform estimates (see \cite[(8.22.4) p. 193, (1.71.6) p. 15]{GS} )
\begin{equation}\label{estimates} \frac{k!(n-1)!}{(k+n-1)!}\,  |\varphi_{k,\lambda}^{n-1}(z)| \leq 1,\,\,\,\,\,  |J_{n-1}(t)| \leq c_n \, t^{n-1}.\end{equation} 
Therefore, it is enough to prove that $ \widetilde{f}(a,z) $ vanishes at infinity whenever $ f $ is compactly supported. The case $ a = (0,\tau) \rightarrow \infty $ is easy to handle. In this case 
$$ \widehat{f}(0,\tau,z) = (n-1)!\, 2^{n-1} \int_{\C^n} f(z-w) \frac{ J_{n-1}(\sqrt{\tau}|w|)}{(\sqrt{\tau}|w|)^{n-1}} \, dw $$
is a constant multiple of the Hankel transform of the radial function 
$$ f_z(w) = \int_{U(n)} f(z-\sigma w) d\sigma .$$ 
Therefore, by appealing to Riemann-Lebesgue lemma for Hankel transforms we can conclude that $ \widehat{f}(0,\tau,z) $ vanishes at infinity.

Consider the case when $ a = (\lambda, (2k+n)|\lambda|) \in R_k $ goes to infinity. In case $ |\lambda| \rightarrow \infty $ we have
$$ \frac{k!(n-1)!}{(k+n-1)} |\widehat{f}(a,z)|  \leq \int_{\C^n} |f^\lambda(z)|\, dz $$ 
in view of \eqref{estimates}. The above certainly vanishes as $ |\lambda| \rightarrow \infty $ in view of Riemann-Lebesgue lemma for the Euclidean Fourier transform.  In case $ (2k_j+n)|\lambda_j \rightarrow \infty $ but $ \lambda_j $ remains bounded, the main term in $ \widetilde{f}(a_j,z) $ is bounded by a contant times
$$ \int_{\C^n} |f^{\lambda_j}(z-w)|\, \frac{ |J_{n-1}(\sqrt{(2k_j+n)|\lambda_j|}\,|w|)|}{(\sqrt{(2k_j+n)|\lambda_j|}\,|w|)^{n-1}}   \, dw.$$
Since $ |f^{\lambda_j}(w)| \leq |f^0(w)| $ is integrable, the estimate $ |J_{n-1}(t)| \leq c\, t^{n-1}\, (1+t)^{-1/2} $ shows that the main term goes to zero as $ j $ goes to infinity. The same is true of the error term in view of \eqref{error-esti} since $ f^0(w)$ is compactly supported. This completes the proof.\\

{\bf Proof of Theorem \ref{inv-plan}:} In view of the relation (see Corollary 2.3.4 in \cite{ST3})
$$ \varphi_{k,\lambda}^{n-1} \ast_\lambda \varphi_{k,\lambda}^{n-1}(z) = (2\pi)^n |\lambda|^{-n}\, \varphi_{k,\lambda}^{n-1}(z) $$ we see that the Fourier transform $  \widehat{f}(a,z)$ satisfies
\begin{equation}\label{necessary}  (2\pi)^{-n} |\lambda|^n\, \int_{\C^n}  \widehat{f}(a,w) \varphi_{k,\lambda}^{n-1}(z-w) e^{\frac{i}{2}\lambda \Im( z\cdot \bar{w})} dw =  f^{-\lambda} \ast_{-\lambda} \varphi_{k,\lambda}^{n-1}(z) =   \widehat{f}(a,z).
\end{equation}
Recalling the definition of $ e_{k,\lambda}^{n-1}(z,t) $ we can rewrite the above as
$$ (2\pi)^{-n} \, |\lambda|^n\, \int_{\C^n}  \widehat{f}(a,w) e_{k,\lambda}^{n-1}((w,0)^{-1}(z,t)) dw =  \, e^{i\lambda t} \, f^{-\lambda} \ast_{-\lambda} \varphi_{k,\lambda}^{n-1}(z).$$
Integrating the above over $ \Omega $ with respect to $ d\nu $ and recalling  \eqref{inv} we obtain the 
inversion formula
\begin{equation}\label{inversion} f(z,t) = \int_\Omega \int_{\C^n}  \widehat{f}(a,w) e_a((-w,0)(z,t))  dw \, d\nu(a).
\end{equation}
The Plancherel theorems for the special Hermite expansions \eqref{spl-her-plan} and the Euclidean Fourier transform gives us 
 the identity
\begin{equation}\label{plancherel} \int_{\He^n} |f(z,t)|^2 dz\, dt = c_n \,  \int_\Omega \int_{\C^n}  |\widehat{f}(a,w)|^2 dw \,d\nu(a).
\end{equation}
This completes the proof of Theorem \ref{inv-plan}.\\

{\bf Proof of Theorem \ref{unitary}:}  For a given $  F \in  L^2_0(\Omega \times \C^n, d\nu\, dw),$ we define
\begin{equation} f(z,t) = \int_\Omega \int_{\C^n}  F(a,w) e_a((-w,0)(z,t))  dw \, d\nu(a).
\end{equation}
As $  F \in  L^2_0(\Omega \times \C^n, d\nu\, dw),$ the function $ f $ defined by
$$ f(z,t) = (2\pi)^{-n-1} \, \int_{-\infty}^\infty \big( \sum_{k=0}^\infty  e^{i\lambda t} F(a,z) \big) |\lambda|^{n} d\lambda $$
belongs to $L^2(\He^n) .$  But then  
$ f^{-\lambda}(z) = (2\pi)^{-n} |\lambda|^n \big( \sum_{k=0}^\infty  F(a,z) \big), $ which gives, in view of the
  condition \eqref{necessary} and the orthogonality relation $ \varphi_{k,\lambda}^{n-1} \ast_{-\lambda} \varphi_{j,\lambda}^{n-1} = 0,\,  j \neq k$ we obtain
 $$   \widehat{f}(a,z)= f^{-\lambda} \ast_{-\lambda} \varphi_{k,\lambda}^{n-1}(z) = F(a,z) $$
 which completes the proof.\\

{\bf Proof of Theorem \ref{haus-young}:} 
 Recall that in the course of the proof of Theorem \ref{riem-leb} we have verified the estimate \eqref{bound for L one}, namely
$$ \sup_{(a,z) \in \Omega \times \C^n} |\widetilde{f}(a,z)| \leq c_{n,1}\, \|f\|_1 .$$
This simply means that $ f \rightarrow \widetilde{f} $ is bounded from $ L^1(\He^n) $ into $ L^\infty(\Omega \times \C^n, d\nu_1(a)\, dz ) $ where  
$$ \int_{\Omega} \varphi(a) d\nu_1(a) = (2\pi)^{-2n-1}\, \int_{-\infty}^\infty \Big( \sum_{k=0}^\infty \varphi(\lambda, (2k+n)|\lambda|)\Big) d\lambda.$$
Restating the Plancherel  theorem for $ \widetilde{f} $ we also have
$$\Big(  \int_\Omega \int_{\C^n}  |\widetilde{f}(a,w)|^2 dw \,d\nu_2(a) \Big)^{1/2} =  \Big( \int_{\He^n} |f(z,t)|^2 dz\, dt \Big)^{1/2} $$
with another measure  $ d\nu_2(a)$  on $ \Omega $ defined by
$$ \int_{\Omega} \varphi(a) d\nu_2(a) = \int_{-\infty}^\infty \Big( \sum_{k=0}^\infty \Big(\frac{k+n-1)!}{k!(n-1)!} \Big)^2 \, \varphi(\lambda, (2k+n)|\lambda|)\Big) |\lambda|^{2n} d\lambda.$$  
Using interpolation theorem with change of measures  \cite{SW} we obtain the Hausdorff-Young inequality
$$ \Big( \int_\Omega \int_{\C^n}  |\widetilde{f}(a,w)|^{p^\prime} dw \,d\nu_p(a) \Big)^{1/p^\prime} \leq  c_{n,p}\, \Big(\int_{\He^n} |f(z,t)|^p dz\, dt \Big)^{1/p}$$
for all $ f \in L^p(\He^n), 1 \leq p \leq 2 $ for some measure $ \nu_p(a) $ on $\Omega.$ The measure $ \nu_p(a) $ can be constructed explicitly.
\\

\begin{rem} Since  $ f \rightarrow \widetilde{f} $ is also bounded from $ L^1(\He^n) $ into $ L^\infty(\Omega \times \C^n, d\nu_2(a)\, dz ), $ interpolation without change of measures gives the inequality
$$ \Big( \int_\Omega \int_{\C^n}  |\widetilde{f}(a,w)|^{p^\prime} dw \,d\nu_2(a) \Big)^{1/p^\prime} \leq  c_{n,p}^\prime\, \Big(\int_{\He^n} |f(z,t)|^p dz\, dt \Big)^{1/p}.$$
However, the result proved above with change of measures is sharper than this inequality.
\end{rem}

\section{ Strichartz Fourier transform vs. Gelfand transform} We now investigate further properties of the Fourier transform $ f \rightarrow \widehat{f} $ which justifies our claim that this transform is the analogue of the Helgason Fourier transform on Riemannian symmetric spaces of noncompact type. If $ f $ is a $ K$-biinvariant function on $ X = G/K$ it is known that the Helgason Fourier transform $ \widetilde{f}(\lambda,b) $ is independent of $ b $ and reduces to the Jacobi transform. In fact if $ f(g) = f(k a_r k^\prime) = f_0(r) $ then 
$ \widetilde{f}(\lambda,b) = J_{\alpha,\beta}f_0(\lambda) Y_0(b) $ where $ Y_0(b) = 1 $ is the unique $K$-fixed vector associated to the representation $ \pi_\lambda$ and
$$ J_{\alpha,\beta}f_0(\lambda) = \int_0^\infty f_0(r) \varphi_\lambda^{\alpha,\beta}(r) w_{\alpha,\beta}(r) dr $$ 
where $ \varphi_\lambda^{\alpha,\beta}$ is the Jacobi function of type $ (\alpha,\beta) $ which depends on the symmetric space. For the Helgason Fourier transform, it is not true that $ \widetilde{\big(f \ast g \big)}(\lambda,b) = \widetilde{f}(\lambda,b)\widetilde{g}(\lambda,b).$ However if $ g $ is $K$-biinvariant, then it is indeed true and we have
$$   \widetilde{\big(f \ast g \big)}(\lambda,b) = J_{\alpha,\beta}g_0(\lambda)\, \widetilde{f}(\lambda,b),\,\,\, g(k a_r k^\prime) = g_0(r).$$ We have a similar situation for the Strichartz  Fourier transform on the Heisenberg group.

\subsection{Gelfand transform on the Heisenberg group} In the case of $ \He^n$, the role of $K$-biinvariant functions is played by $U(n)$-invariant functions of $ \He^n,$ also known as radial functions. Note that when $ f $ is such a function, then $ f(z,t) = f_0(|z|,t) $ for a unique function on $ \R^+ \times \R $ and we know that the special Hermite expansion of the radial function $ f^\lambda(z) $ reduces to the Laguerre expansion, see \cite[(2.4.5),p. 61]{ST3} . Thus
$$ f^\lambda \ast_\lambda \varphi_{k,\lambda}^{n-1}(z) = R_k^{n-1}(\lambda, f) \varphi_{k,\lambda}^{n-1}(z) $$
where $R_k^{n-1}(\lambda, f) $ is the $k$-th Laguerre coefficient of $ f^\lambda $ given by
$$R_k^{n-1}(\lambda, f)   = (2\pi)^n\, 2^{-n+1}\, \frac{k!}{(k+n-1)!} \, \int_0^\infty f_0^\lambda(r) \varphi_{k,\lambda}^{n-1}(r) \, r^{2n-1} dr.$$
Since the surface measure of $ S^{2n-1} $ is  $ 2\, \frac{ \pi^n}{\Gamma(n)} $ we can write the above in the form
$$R_k^{n-1}(\lambda, f)   = \frac{k! (n-1)!}{(k+n-1)!} \, \int_{\C^n} f^\lambda(z) \varphi_{k,\lambda}^{n-1}(z) \, dz.$$
Thus we see that if $ f $ is a radial function on $ \He^n, $ then 
\begin{equation}\label{fouri-radi} \widehat{f}(a,z) = R_k^{n-1}(-\lambda, f)  e_{k,\lambda}^{n-1}(z,0) 
\end{equation}
so that $ \widehat{f}(a,z)$ is proportional to the unique $U(n)$-fixed vector 
for the representation $ \rho_k^\lambda.$  For $ f , g \in L^1(\He^n) $ we also have
$$ \widehat{f\ast g}(a,z) = R_k^{n-1}(-\lambda, g_0) \, \widehat{f}(a,z).$$
As in the case of  the Helgason Fourier transform, a more general result, known as the Hecke-Bochner formula is true.\\

  Though the algebra $ L^1(\He^n) $ under convolution is noncommutative, the subalgebra $ L^1(\He^n/U(n)) $ consisting of radial functions is commutative. This follows from the fact that 
for any radial function $ f $ the special Hermite expansion of $ f^\lambda $ reduces to the Laguerre expansion as proved above:
\begin{equation}\label{lag-exp} f^\lambda(z) = (2\pi)^{-n}\, |\lambda|^n \, \sum_{k=0}^\infty R_k^{n-1}(\lambda, f)\, \varphi_{k,\lambda}^{n-1}(z).
\end{equation} 
As elements of $ L^1(\He^n/U(n)) $ are precisely the $ U(n)$-biinvariant functions on the Heisenberg motion group, this simply means that $ (G_n, U(n)) $ is a Gelfand pair. The multiplicative linear functionals on this algebra are given by bounded spherical functions which come in two families as shown in \cite{BJR}. These are given by
\begin{equation}\label{spheri-fun} \widetilde{e}_a(z,t) = \frac{k! (n-1)!}{(k+n-1)!}\, e_{k,\lambda}^{n-1}(z,t),\,\,  \chi_\tau(z,t) = (n-1)! 2^{n-1} \frac{J_{n-1}(\sqrt{\tau}|z|)}{(\sqrt{\tau}|z|)^{n-1}}, \, \tau  \geq 0.
\end{equation}
Thus we see that the Gelfand spectrum of the algebra $ L^1(\He^n/U(n)) $ is precisely the Heisenberg fan $ \Omega.$ The Gelfand transform on $ L^1(\He^n/U(n)) $ is defined as the map which takes $ f $ into $ \mathcal{G}f $ given by
$$ \mathcal{G}f(a) =  \int_{\He^n} f(z,t) \, \widetilde{e}_a(z,t)\, dz\, dt ,\,\, \, \mathcal{G}f(0, \tau) = \int_{\He^n} f(z,t) \,\chi_\tau(z,t)\, dz\,dt.$$
Thus the relation \eqref{fouri-radi} reads as 
\begin{equation}\label{stri-gel} \widehat{f}(a,z) =  \mathcal{G}f(a)\, e_{k,\lambda}^{n-1}(z,0),\,\,  f \in L^1(\He^n/U(n)).
\end{equation}
The same relation holds also for $ a = (0,\tau) $ as can be easily verified.\\

From the expansion \eqref{lag-exp} for  radial functions we see that their Fourier transforms are functions of the Hermite operator:
$$ \widehat{f}(\lambda) = \sum_{k=0}^\infty R_k(\lambda,f)\, P_k(\lambda) =  \sum_{k=0}^\infty  \mathcal{G}f(a)\, P_k(\lambda).$$
This relation allows us to study the Fourier transform of radial functions in terms of their Gelfand transforms, also  called spherical Fourier transforms for obvious reasons. As the Gelfand transform is scalar valued, it has been used as an alternate for the group Fourier transform by several authors. For example, in the work \cite{ADR1} the authors have studied the image of radial Schwartz functions under the spherical Fourier transform.  In what follows, we will further explore the connection between the Strichartz Fourier transform and  Gelfand transforms.

\subsection{Hecke-Bochner formula for the Strichartz Fourier transform} We begin by recalling briefly some basic facts about bigraded spherical harmonics, referring to \cite[Section 2.5]{ST3} for details. Given a pair $ (p,q) $ of non-negative integers, we say that a polynomial $ P(z, \bar{z}) $ on $ \C^n $ is a bigraded solid harmonic if it is harmonic and has the form 
$$ P(z,\bar{z}) = \sum_{|\alpha| =p} \sum_{|\beta|=q} c_{\alpha, \beta} \, z^\alpha\, \bar{z}^\beta.$$
They are uniquely determined by their restrictions to the unit sphere $ S^{2n-1}.$ We denote by $ \mathcal{S}_{p,q} $ the space of all bigraded spherical harmonics, i.e. restrictions of bigraded solid harmonics. It is then known that
$ L^2(S^{2n-1}) = \bigoplus_{p,q} \mathcal{S}_{p,q}. $ Each $ \mathcal{S}_{p,q}$ is finite dimensional and by choosing an orthonormal basis $ S_{p,q}^j, j =1,2,..., d(p,q) $ for each $ (p,q) $ we obtain an orthonormals basis for $ L^2(S^{2n-1}).$\\

Let us return to the Euclidean Fourier transform \eqref{e-fouri} for a moment and consider the integral
$$ \int_{S^{n-1}} \widehat{f}(\lambda, \omega)\, Y_m(\omega)\, d\omega $$
 where $ Y_m $ is a (standard) spherical harmonic of degree  $ m.$  Then it is known that (see for example \cite{SW1})
 \begin{equation}\label{e-fouri-harm}  \lambda^{-m}\, \int_{S^{n-1}} \widehat{f}(\lambda, \omega)\, Y_m(\omega)\, d\omega  = c_{n,m}  \int_0^\infty  r^{-m} f_m(r) \, \frac{J_{n/2+m-1}(\lambda r)}{(\lambda r)^{n/2+m-1}} r^{n+2m-1} dr 
 \end{equation}
where $ f_m(r) $ is the spherical harmonic coefficient $ (f(r\cdot),Y_m)_{L^2(S^{n-1})}.$  Suppose $ g $ is a radial function on $ \R^n$ and $ f(x) = g(x) P_m(x) $ where $ P_m $ is a solid harmonic of degree $m.$ Then from the above formula,
we easily see that the spherical harmonic expansion of $ \widehat{f}(\lambda,\omega) $ has only one term and hence
\begin{equation}\label{e-hecke} \widehat{f}(\lambda,\omega) = c_{n,m} \, P_m(\lambda \omega) \, \int_0^\infty  g(r) \, \frac{J_{n/2+m-1}(\lambda r)}{(\lambda r)^{n/2+m-1}} r^{n+2m-1} dr .
\end{equation}
We also observe that if we consider $ g $ as a radial function on $ \R^{n+2m}$ then the above can be rewritten as $ \mathcal{F}_n(gP_m) = c_{n,m} P_m \, \mathcal{F}_{n+2m}(g)$ where $ \mathcal{F}_k $ stands for the Fourier transform on $ \R^k.$
These formulas are known as Hecke-Bochner identities in the literature.\\

There is a Hecke-Bochner formula for the Helgason Fourier transform too, but we do not intend to state it here as it requires  quite a bit of preparation, see \cite{SH1,SH2}. Given below is the Hecke-Bochner formula for the Strichartz Fourier transform which is the exact analogue of \eqref{e-fouri-harm}. In what follows we use the following convention. A radial function $ g $ on $ \He^n $  will be simultaneously considered as a radial  function on any $ \He^m.$  When $ k$ is a negative integer we set $ \varphi_{k,\lambda}^m(z)  = 0$ for any $ m $ and $ z \in \C^m.$  Moreover, the Gelfand tranform for the algebra $ L^1(\He^m/U(m)) $ will be denoted by $ \mathcal{G}_m.$ With these notations we have\\

\begin{thm} For any $ f \in L^1(\He^n) $ and $ S_{p,q} \in \mathcal{S}_{p,q} ,$  let $ f_{p,q}(r,t) = (f(r\cdot,t),S_{p,q})_{L^2(S^{2n-1})} $ stand for the spherical harmonic coefficient of $ f(r\omega, t).$ For any $ a \in R_k, \lambda > 0 $ we have the following:
$$ \int_{S^{2n-1}} \widehat{f}(a, r\omega) \, S_{p,q}(\omega)\, d\omega = (2\pi)^{-p-q} (|\lambda| r)^{p+q} \, \mathcal{G}_{n+p+q}(g_{p,q})(a(p,q)) \, e_{k-q,\lambda}^{n+p+q-1}(r,0)$$
where $ g_{p,q}(z,t) = |z|^{-p-q}\, f_{p,q}(|z|,t) $ and $ a(p,q) = (\lambda, (2k+p-q+n)|\lambda|).$ When $ \lambda <0$ we have
$$ \int_{S^{2n-1}} \widehat{f}(a, r\omega) \, S_{p,q}(\omega)\, d\omega = (2\pi)^{-p-q} (|\lambda| r)^{p+q} \, \mathcal{G}_{n+p+q}(g_{p,q})(a(q,p)) \, e_{k-p,\lambda}^{n+p+q-1}(r,0).$$
\end{thm}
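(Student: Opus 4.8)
The plan is to reduce everything to the known Euclidean-type Hecke–Bochner identity for the special Hermite (Laguerre) expansion, which expresses how $f^{-\lambda}\ast_{-\lambda}\varphi_{k,\lambda}^{n-1}$ interacts with a factor of the form $g(z)P_{p,q}(z,\bar z)$. First I would recall from \cite[Section 2.5]{ST3} the bigraded Hecke–Bochner formula for twisted convolution: if $h(z)=g(|z|)P_{p,q}(z,\bar z)$ with $P_{p,q}\in\mathcal{S}_{p,q}$ a bigraded solid harmonic and $g$ radial, then for $\lambda>0$,
\[
h\ast_\lambda\varphi_{k,\lambda}^{n-1}(z) = c_{p,q}\,P_{p,q}(z,\bar z)\, \big(g\ast_\lambda\varphi_{k-q,\lambda}^{n+p+q-1}\big)(z),
\]
the dimension shift $n\mapsto n+p+q$ and index shift $k\mapsto k-q$ being exactly the twisted-convolution analogue of the classical identity $\mathcal{F}_n(gP_m)=c\,P_m\,\mathcal{F}_{n+2m}(g)$ recorded in \eqref{e-hecke}. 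The asymmetry between $p$ and $q$ (and hence between the $\lambda>0$ and $\lambda<0$ cases) comes from the $e^{\frac{i}{2}\lambda\Im(z\cdot\bar w)}$ kernel: replacing $\lambda$ by $-\lambda$ interchanges the roles of the holomorphic and antiholomorphic variables, i.e. swaps $(p,q)$.

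Next I would decompose $f$ in the $z$-variable along bigraded spherical harmonics. Writing $f(r\omega,t)=\sum_{p,q}\sum_j f_{p,q}^j(r,t)\,S_{p,q}^j(\omega)$ and setting $g_{p,q}(z,t)=|z|^{-p-q}f_{p,q}(|z|,t)$ so that the $\mathcal{S}_{p,q}$-component of $f(\cdot,t)$ is literally $g_{p,q}(z,t)\,|z|^{p+q}S_{p,q}(z/|z|)$ — a radial function times a bigraded solid harmonic of bidegree $(p,q)$ — I would apply $f^{-\lambda}\ast_{-\lambda}\varphi_{k,\lambda}^{n-1}$ and invoke the identity above componentwise. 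By orthogonality of the $\mathcal{S}_{p,q}$ on $S^{2n-1}$, integrating $\widehat{f}(a,r\omega)$ against a fixed $S_{p,q}(\omega)$ kills all but the $(p,q)$-term, and the surviving term is $c_{p,q}\,r^{p+q}S_{p,q}(\omega)$ times $\big(g_{p,q}^{-\lambda}\ast_{-\lambda}\varphi_{k-q,\lambda}^{n+p+q-1}\big)(r\omega)$ (where $g_{p,q}$ is now viewed as a radial function on $\He^{n+p+q}$). Since $g_{p,q}$ is radial, the last expression is, by \eqref{fouri-radi}–\eqref{stri-gel} applied in dimension $n+p+q$, exactly $\mathcal{G}_{n+p+q}(g_{p,q})(a(p,q))\,\varphi_{k-q,\lambda}^{n+p+q-1}(r\omega)\,e^{0}$, i.e. $\mathcal{G}_{n+p+q}(g_{p,q})(a(p,q))\,e_{k-q,\lambda}^{n+p+q-1}(r,0)$ up to the normalizing constants; collecting the powers of $2\pi$ and $|\lambda|$ gives the stated $(2\pi)^{-p-q}(|\lambda|r)^{p+q}$. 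The ray on which the Gelfand transform is evaluated is $a(p,q)=(\lambda,(2(k-q)+n+p+q)|\lambda|)=(\lambda,(2k+p-q+n)|\lambda|)$, matching the statement. The $\lambda<0$ formula follows by the $(p,q)\leftrightarrow(q,p)$ swap noted above, producing $e_{k-p,\lambda}^{n+p+q-1}$ and $a(q,p)$.

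The main obstacle is establishing the twisted-convolution Hecke–Bochner identity with the correct constant $c_{p,q}$ and, in particular, pinning down the index and dimension shifts together with the $p\leftrightarrow q$ asymmetry in the sign of $\lambda$; everything else is bookkeeping with the normalizations $(2\pi)^{-n}|\lambda|^n$ coming from \eqref{spl-her-exp} and the relation between $R_k^{n-1}$ and $\mathcal{G}$. I would handle that identity either by citing \cite[Section 2.5]{ST3} directly if it is stated there in the required generality, or by deriving it from the action of $\pi_\lambda$ on products of Hermite functions with solid harmonics, using that $P_{p,q}(z,\bar z)\,e^{-\frac14|\lambda||z|^2}$ lies in a single $U(n)$-isotypic component and that twisted convolution with $\varphi_{k,\lambda}^{n-1}$ is the projection onto the $k$-th special Hermite eigenspace. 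A secondary point to be careful about is the degenerate cases $k<q$ (resp. $k<p$): by the stated convention $\varphi_{k,\lambda}^m=0$ for negative $k$, so both sides vanish and the formula still holds, but I would remark on this explicitly so the reader sees the range of validity is genuinely all $a\in R_k$.
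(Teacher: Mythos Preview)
Your proposal is correct and follows essentially the same route as the paper: reduce to the bigraded Hecke--Bochner identity for twisted convolution from \cite[Theorem~2.6.1]{ST3} (recorded in the paper as \eqref{spl-her-heck}), expand $f$ in bigraded spherical harmonics, apply the identity termwise, and identify the radial factor via \eqref{stri-gel} as a Gelfand transform on $\He^{n+p+q}$. One bookkeeping point to watch: the identity in \cite{ST3} gives the shift $k\to k-p$ for \emph{positive} twisting parameter, so the $k-q$ appearing in the theorem's $\lambda>0$ case arises because $\widehat f(a,z)=f^{-\lambda}\ast_{-\lambda}\varphi_{k,\lambda}^{n-1}$ uses parameter $-\lambda<0$, which triggers the $p\leftrightarrow q$ swap---this is precisely how the paper's proof is organized (it carries out the $\lambda<0$ case explicitly and leaves $\lambda>0$ as analogous).
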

\begin{proof} The proof is indeed a rewriting of  \cite[Theorem 2.6.1]{ST3} where the following result is proved. Let $ f \in L^1(\He^n) $ be of the form $ f(z,t) = g(z,t) P(z) $ where $ g $ is radial and $ P $ is a bigraded solid harmonic of degree $(p,q).$
Then for any $ \lambda > 0 $ we have
\begin{equation}\label{spl-her-heck} f^\lambda \ast_\lambda \varphi_{k,\lambda}^{n-1}(z) = (2\pi)^{-p-q} \lambda^{p+q}\, P(z) \, g^\lambda \ast_\lambda \varphi_{k-p,\lambda}^{n+p+q-1}(z) 
\end{equation}
where the convolution on the right hand side is taken over $ \C^{n+p+q}.$ There is a similar formula when $ \lambda < 0 $ where the roles of $ p $ and $q$  are interchanged.  This result, stated in terms of Weyl transform, is due to Geller \cite{DG}. It is enough to prove the result when $ S_{p,q} = S_{p,q}^j $ is a member of the orthonormal basis for $ \mathcal{S}_{p,q} $ which we have described earlier.

Expanding $ f^\lambda(z) $ in terms of $ S_{p,q}^j $ and recalling the definitions of  $ f_{p,q} $ and $ g_{p,q}$ we have
$$ f^\lambda(z) = \sum_{p,q} \sum_{j=1}^{d(p,q)} P_{p,q}^j(z)\, (g_{p,q}^j)^\lambda(z).$$
As $ \widehat{f}(a,z) = f^{-\lambda}\ast_{-\lambda} \varphi_{k,\lambda}^{n-1}(z) ,$ assuming $ \lambda < 0 $ (so that $ -\lambda >0$), the above formula gives
$$ \widehat{f}(a,z) = \sum_{p,q} (2\pi)^{-p-q} |\lambda|^{p+q}\, \Big(\sum_{j=1}^{d(p,q)} P_{p,q}^j(z)\, (g_{p,q}^j)^{|\lambda|} \ast_{|\lambda|} \varphi_{k-p,\lambda}^{n+p+q-1}(z)\Big).$$
As $ g_{p,q}^j(z,t) $ is radial on $ \He^{n+p+q} $ using \eqref{stri-gel} we can rewrite the above expansion as
$$ \widehat{f}(a,z) = \sum_{p,q} (2\pi)^{-p-q} |\lambda|^{p+q}\, \Big(\sum_{j=1}^{d(p,q)} P_{p,q}^j(z)\, \mathcal{G}_{n+p+q}(g_{p,q}^j)(a(q,p)) \Big)\, e_{k-p,\lambda}^{n+p+q-1}(z,0) .$$
By calculating the spherical harmonic coefficients of $  \widehat{f}(a,r\omega) $ we obtain the stated formula.\\
\end{proof}

The identity \eqref{spl-her-heck} which was used in the proof of the  above theorem can be restated as the following Hecke-Bochner formula for the Strichartz Fourier transfiorm.

\begin{cor} Suppose $ f \in L^1(\He^n) $ is of the form $ f(z,t) = P(z) g(z,t) $ where $ g $ is radial and $ P $ is a solid harmonic of degree $(p,q).$ Then for any $ \lambda >0,$
$$ \widehat{f}(a,z) = (2\pi)^{-p-q} |\lambda|^{p+q} \, P(z)\, \mathcal{G}_{n+p+q}g(a(p,q)) \, e_{k-q,\lambda}^{n+p+q-1}(z,0).$$
A similar formula holds for $ \lambda <0.$\\
\end{cor}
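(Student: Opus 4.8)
The plan is to derive the corollary directly from the Hecke-Bochner identity \eqref{spl-her-heck} quoted in the proof of the preceding theorem, which is really the same statement packaged without the spherical-harmonic expansion step. First I would recall that by definition $\widehat{f}(a,z) = f^{-\lambda} \ast_{-\lambda} \varphi_{k,\lambda}^{n-1}(z)$ for $a \in R_k$, $a = (\lambda, (2k+n)|\lambda|)$. Since $f(z,t) = P(z) g(z,t)$ with $P$ a bigraded solid harmonic of degree $(p,q)$ and $g$ radial, taking the inverse Fourier transform in the central variable gives $f^{-\lambda}(z) = P(z)\, g^{-\lambda}(z)$, because $P(z)$ does not involve $t$.

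Next, I would apply \eqref{spl-her-heck}. For $\lambda > 0$ we have $-\lambda < 0$, so I must use the ``$\lambda < 0$'' version of \eqref{spl-her-heck}, in which the roles of $p$ and $q$ are interchanged: this yields
$$ f^{-\lambda} \ast_{-\lambda} \varphi_{k,\lambda}^{n-1}(z) = (2\pi)^{-p-q} |\lambda|^{p+q}\, P(z)\, g^{-\lambda} \ast_{-\lambda} \varphi_{k-q,\lambda}^{n+p+q-1}(z), $$
where the twisted convolution on the right is now over $\C^{n+p+q}$ and the Laguerre index has dropped from $k$ to $k-q$ (with the convention $\varphi_{k,\lambda}^m = 0$ for $k < 0$). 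The spectral parameter on the fan shifts accordingly: the eigenvalue attached to $\varphi_{k-q,\lambda}^{n+p+q-1}$ is $(2(k-q) + (n+p+q))|\lambda| = (2k + p - q + n)|\lambda|$, which is precisely $a(p,q)$.

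Finally, I would invoke the relation \eqref{stri-gel} applied in the group $\He^{n+p+q}$: since $g$ is radial there, $\widehat{g}(b,z) = g^{-\lambda} \ast_{-\lambda} \varphi_{j,\lambda}^{n+p+q-1}(z) = \mathcal{G}_{n+p+q}g(b)\, e_{j,\lambda}^{n+p+q-1}(z,0)$ for the appropriate ray. Specializing to $j = k-q$ and $b = a(p,q)$, and substituting into the displayed identity, gives
$$ \widehat{f}(a,z) = (2\pi)^{-p-q} |\lambda|^{p+q}\, P(z)\, \mathcal{G}_{n+p+q}g(a(p,q))\, e_{k-q,\lambda}^{n+p+q-1}(z,0), $$
which is the asserted formula. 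The case $\lambda < 0$ is identical except that one uses the ``$\lambda > 0$'' version of \eqref{spl-her-heck} directly, so $p$ and $q$ swap back and the Laguerre index becomes $k - p$, giving $a(q,p)$ as the argument of the Gelfand transform. There is essentially no obstacle here: the only points requiring care are the bookkeeping of which of $p,q$ plays which role after passing from $\lambda$ to $-\lambda$, and checking that the shifted eigenvalue indeed lands on the ray named $a(p,q)$ (resp. $a(q,p)$); both are routine once \eqref{spl-her-heck} and \eqref{stri-gel} are granted.
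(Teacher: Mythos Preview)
Your proof is correct and follows exactly the route the paper intends: the corollary is stated there merely as a restatement of \eqref{spl-her-heck}, and you have spelled out precisely that restatement by combining \eqref{spl-her-heck} (with the $p\leftrightarrow q$ swap coming from the passage $\lambda\mapsto -\lambda$) with the radial identity \eqref{stri-gel} on $\He^{n+p+q}$. Your bookkeeping of the Laguerre index $k-q$ and the shifted fan point $a(p,q)$ is accurate and matches the computation carried out in the proof of the preceding theorem.
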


We also have a representation theoretic interpretation of the Hecke-Bochner formula. 
The spherical harmonic expansion of a function $ f(z,t) $ on $ \He^n$ can be written as 
\begin{equation}\label{spher-harm-exp} f(z,t) = \sum_{(p,q) \in \mathbb{N}^2}  \sum_{j=1}^{d(p,q)}g_{(p,q),j}(r,t)\, P_{p,q}^j(z),\,\,\, z = r\omega \end{equation}
where $ P_{p,q}^j $ is the solid harmonic corresponding to $ S_{p,q}^j.$ For each pair $(p,q) $ the space $\mathcal{S}_{p,q} $ supports an irreducible unitary representation $ R_{(p,q)} $ of the unitary group $ K= U(n).$ The action of $ R_{(p,q)}$ on $\mathcal{S}_{p,q} $ is given by  $ R_{(p,q)}(\sigma) S(\omega) = S(\sigma^{-1}\omega), \sigma \in K.$ Let $ M  $ be the isotropic subgroup of $ K $ which fixes the coordinate vector $ e_1 \in \C^n $  which can be indentified with $ U(n-1).$ It is known that each $ R_{(p,q)} $ has a unique $M$-fixed vector in $ \mathcal{S}_{p,q}.$ Such representations are called class one representations and it is known that the any such irreducible unitary representation of $ K $ is unitarily equivalent to one and only one of $ R_{(p,q)}.$ Let $ \widehat{K}_0 $ stand for the set of all equivalence classes of such representations of $ K.$ We can rewrite \eqref{spher-harm-exp} as
\begin{equation}\label{spher-harm-exp-one}
f(z,t) = \sum_{\delta \in \widehat{K}_0}  f_\delta(z,t),\,\,\,\, \,\, \,\,\, f_\delta(z,t) =  \sum_{j=1}^{d(p,q)}g_{(p,q),j}(r,t)\, P_{p,q}^j(z), \,\,\,  \delta = (p,q).
\end{equation}
We can view the functions $ f_\delta $ as radial functions on $ \He^n $ taking values in the finite dimensional Hilbert space $ \mathcal{S}_\delta.$\\

Let $ L^2(\He^n/U(n), \mathcal{H}_\delta) $ stand for the space of all such radial functions taking values in $ \mathcal{H}_\delta$ so that 
\begin{equation}\label{direct-sum} L^2(\He^n) = \bigoplus_{\delta \in \widehat{K}_0}  L^2(\He^n/U(n), \mathcal{S}_\delta).\end{equation}
It then follows that each of the spaces $ L^2(\He^n/U(n), \mathcal{H}_\delta) $ is invariant under the Strichartz Fourier transform. Indeed, by the Hecke-Bochner formula it is clear that, for $ \lambda >0$ 
\begin{equation}\label{fouri-delta} 
 \widehat{f_\delta}(a,z) = (2\pi)^{-p-q} |\lambda|^{p+q} \, \Big(  \sum_{j=1}^{d(p,q)} P_{p,q}^j(z)\, \mathcal{G}_{n+p+q}(g_{\delta,j})(a(p,q))\Big)  \, \varphi_{k-q,\lambda}^{n+p+q-1}(z).
\end{equation}
Thus we can consider the Strichartz Fourier transform as a family of Gelfand transforms $ \mathcal{G}_\delta $ indexed by the class one representations  $ \delta $ of  the pair $(U(n),U(n-1))$.  If $ f_\delta $ is thus identified with the vector $ ( g_{\delta,j})_{j=1}^{d(\delta)},$ then $  \widehat{f_\delta}(a,z)$ is given by the vector with components 
\begin{equation}\label{fouri-delta-compo} (2\pi)^{-p-q} |\lambda|^{p+q} \, r^{p+q}\,  \mathcal{G}_{n+p+q}(g_{\delta,j})(a(p,q))  \, \varphi_{k-q,\lambda}^{n+p+q-1}(r).
\end{equation} 
We will make use of this view in the next section where we study the image of the Schwartz space under the Strichartz Fourier transform.\\

\subsection{The image of $\mathcal{S}(\He^n)$ under the Strichartz Fourier transform} Let $ \mathcal{S}(\He^n) $ stand for the space of all Schwartz class functions on the Heisenberg group. As the underlying manifold of $ \He^n $ is just $ \R^{2n+1} $ this space is nothing but $ \mathcal{S}(\R^{2n+1}).$ In his pioneering work Geller \cite{DG} has proved a characterisation of the image of $ \mathcal{S}(\He^n) $ under the group Fourier transform in terms of certain asymptotic series. In 1998, Benson et al. \cite{BJR} studied the image of Schwartz functions on the Heisenberg group under the spherical Fourier transform associated to Gelfand pairs. In particular, for radial functions on $ \He^n$ they have described the image as a space of rapidly decreasing functions on the Gelfand spectrum identified with the Heisenberg fan $ \Omega.$  In their work, rapidly decreasing functions on $ \Omega $ are defined in terms of certain derivatives and finite difference operators.\\

In a series of papers \cite{ADR1,ADR2, ADR3, ADR4}, Astengo et al have studied the problem of characterizing the image of $ \mathcal{S}(\He^n) $ under the Fourier transform. By using multiple Fourier series, they have reduced the problem to the characterization of the image of polyradial Schwartz functions under the Gelfand transform (spherical Fourier transform). See the survey \cite{FR}  for a readable account of these works with connections to spectral multipliers. In particular, for the class $ \mathcal{S}_K(\He^n) $ of radial functions (recall that $ K = U(n)$) they have proved the following elegant result.\\

\begin{thm}\cite[Astengo et al.]{ADR1}\label{fourier-image} Let $ \mathcal{S}(\Omega) $ stand for the space of restrictions of Schwartz functions on $ \R^2 $ to $ \Omega$ equipped with the quotient topology $ \mathcal{S}(\R^2)/ \{ f: f|_\Omega =0 \}.$ The Gelfand transform $ \mathcal{G}_n $ is a topological isomorphism between $\mathcal{S}_K(\He^n) $ and $\mathcal{S}(\Omega) .$\\
\end{thm}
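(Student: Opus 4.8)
The plan is to peel off the central variable, reduce the statement to a one‑dimensional problem about Laguerre coefficients, and then settle it using the sharp asymptotic relation between Laguerre and Bessel functions that already appeared in \eqref{asymp-lag}--\eqref{error-esti}. First I would use the partial Fourier transform $f\mapsto f^{\lambda}$ in the $t$‑variable, which is a topological isomorphism of $\mathcal{S}(\He^n)=\mathcal{S}(\R^{2n+1})$ onto $\mathcal{S}(\R\times\C^n)$ preserving $U(n)$‑radiality in $z$; thus $f\in\mathcal{S}_K(\He^n)$ corresponds to $F(\lambda,z)=f^{\lambda}(z)$ with $F$ Schwartz and radial in $z$. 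For each fixed $\lambda\ne0$, expanding the radial function $F(\lambda,\cdot)$ in the Laguerre basis $\varphi_{k,\lambda}^{n-1}$ gives, by \eqref{lag-exp}, precisely the values of $\mathcal{G}_n f$ on the ray $R_k$ (up to the normalizing factor $\tfrac{k!(n-1)!}{(k+n-1)!}$), namely the Laguerre coefficients $R_k^{n-1}(\pm\lambda,f)$, while $\mathcal{G}_n f(0,\tau)$ is a constant multiple of the Hankel transform of the radial function $f^{0}$. So the theorem becomes the assertion that the family $\big(\lambda\mapsto R_k^{n-1}(\lambda,f)\big)_{k\ge0}$ together with that Hankel transform at $\lambda=0$ assembles into the restriction to $\Omega$ of a Schwartz function on $\R^{2}$, with matching seminorm control in both directions.

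For the direction $f\in\mathcal{S}_K(\He^n)\Rightarrow\mathcal{G}_n f\in\mathcal{S}(\Omega)$, the decay of $\mathcal{G}_n f$ and its derivatives is the soft part: since $\mathcal{L}e_{k,\lambda}^{n-1}=(2k+n)|\lambda|\,e_{k,\lambda}^{n-1}$ and $-iTe_{k,\lambda}^{n-1}=\lambda\,e_{k,\lambda}^{n-1}$, integrating by parts against these joint eigenfunction relations (and using the bound $\tfrac{k!(n-1)!}{(k+n-1)!}|\varphi_{k,\lambda}^{n-1}(z)|\le1$ from \eqref{estimates}) shows that $\tau^{N}|\lambda|^{M}\,\mathcal{G}_n f$ and its weighted analogues are bounded in $L^{1}$‑norm by Schwartz seminorms of $f$. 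The genuinely delicate point is smoothness on $\R^{2}$ across the rays $R_k$ and at the vertex $(0,0)$. Here the tool is \eqref{asymp-lag}: writing $\mathcal{G}_n f(\lambda,\tau)$ as the integral of $F(\lambda,\cdot)$ against $\tfrac{k!(n-1)!}{(k+n-1)!}\varphi_{k,\lambda}^{n-1}$ and inserting the Bessel main term plus the uniformly controlled error $m_k$ of \eqref{error-esti}, one sees that $\mathcal{G}_n f$ differs from a Hankel‑type integral of $F$ depending smoothly on $(\lambda,\tau)$ only by a function already smooth and rapidly decreasing on all of $\R^{2}$; differentiating under the integral sign, with the analogues of \eqref{error-esti} for derivatives of $m_k$, produces the Schwartz extension. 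Alternatively — and this is the route I would actually take — one first invokes Benson--Jenkins--Ratcliff \cite{BJR} to identify $\mathcal{G}_n(\mathcal{S}_K(\He^n))$ with the space of rapidly decreasing functions on the Gelfand spectrum $\Omega$ in their sense (defined by difference operators in $k$ and derivatives in $\lambda$), and then proves the purely function‑theoretic statement that this space coincides with $\mathcal{S}(\R^{2})\big|_{\Omega}$; that reduction isolates the analytic core, which is again governed by the Laguerre--Bessel asymptotics.

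For the converse, given $m\in\mathcal{S}(\R^{2})$ I would define $f$ through the synthesis formula $f^{\lambda}(z)=(2\pi)^{-n}|\lambda|^{n}\sum_{k\ge0}m\big(\lambda,(2k+n)|\lambda|\big)\varphi_{k,\lambda}^{n-1}(z)$ followed by the inverse Fourier transform in $t$. The rapid decay of $m$ and all its derivatives, combined with \eqref{estimates} and the eigenfunction relations for $\mathcal{L}$ and $-iT$ (and their companions, encoding multiplication by $|z|^{2}$ and $t^{2}$), forces this series to converge in every Schwartz seminorm jointly in $(\lambda,z)$ — near $\lambda=0$ the prefactor $|\lambda|^{n}$ turns the sum over the $O(\tau/|\lambda|)$ relevant rays into a convergent Riemann sum for the Hankel integral — so $f\in\mathcal{S}(\He^n)$, and radiality is automatic since each $\varphi_{k,\lambda}^{n-1}$ is. By construction $\mathcal{G}_n f=m|_{\Omega}$, and $\mathcal{G}_n$ is injective on $\mathcal{S}_K(\He^n)$ by the Plancherel/inversion theory already established. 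Since $\mathcal{S}_K(\He^n)$ and $\mathcal{S}(\Omega)$ are Fréchet and both $\mathcal{G}_n$ and its inverse are sequentially continuous by the seminorm estimates above, the closed graph theorem yields continuity in both directions, hence a topological isomorphism.

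The main obstacle throughout is the same one: converting the discrete ``transversal'' data carried on the rays $R_k$ into genuine $C^{\infty}$ — indeed Schwartz — regularity on $\R^{2}$ across the limiting ray $R_{\infty}$ and at the accumulation point $(0,0)$. Away from $R_{\infty}$ this is soft, but near the seam it requires the precise Laguerre asymptotics with error control, uniformly in all parameters and after arbitrarily many differentiations; this uniform Laguerre analysis is the technical heart of the argument (carried out in Astengo, Di Blasio and Ricci \cite{ADR1}).
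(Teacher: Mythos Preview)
The paper does not contain a proof of this theorem: it is stated with attribution to Astengo, Di Blasio and Ricci \cite{ADR1} and used as a black box in the sequel (Theorem~\ref{fourier-image-new}). So there is no ``paper's own proof'' to compare your proposal against.

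That said, your sketch is a reasonable outline of the strategy of \cite{ADR1}, and you correctly identify the genuine difficulty --- extending the discrete data on the rays $R_k$ to a Schwartz function on $\R^2$, with uniform control near the limiting ray and the vertex. Your soft direction (decay via integration by parts against $\mathcal{L}$ and $T$) and your synthesis direction are fine as outlines. But you should be aware that the single asymptotic \eqref{asymp-lag}--\eqref{error-esti} with its stated error bound is not by itself enough to produce the full $C^\infty$ extension: one needs the analogous asymptotics after applying arbitrary powers of the difference operator in $k$ and derivatives in $\lambda$, with error terms that are still uniformly controlled. You acknowledge this in your last paragraph, but the phrase ``the analogues of \eqref{error-esti} for derivatives of $m_k$'' hides a substantial amount of work --- this is precisely what \cite{ADR1} (building on Hulanicki's functional calculus and the detailed Laguerre analysis) carries out, and it is not a routine differentiation under the integral sign. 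Your alternative route via \cite{BJR} followed by the identification of their rapidly decreasing class with $\mathcal{S}(\R^2)|_\Omega$ is in fact closer to how \cite{ADR1} proceeds, and is the honest way to state what remains to be done.
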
 

Recall that the  components $ g_{\delta,j} $ of $ f_\delta $ are defined by $ r^{-p-q} f_{\delta,j}(r,t)$ where
$$ f_{\delta,j}(r,t) = \int_{S^{2n-1}} f(r\omega,t) S_{\delta,j}(\omega)\, d\omega.$$
It is clear that when $ f \in \mathcal{S}(\He^n)$ these functions $ g_{\delta,j} $ considered as radial functions on $ \He^{n+p+q} $ are Schwartz functions. For the sake of brevity let us use the following notations: we write $ \He^\delta $ in place of $ \He^{n+p+q}$ and $ U(\delta) $ in place of $ U(n+p+q)$ so that $ \He^n$ and $ U(n) $ correspond to the trivial representation of $ U(n).$ Following the previous authors, let  $ \mathcal{S}_{U(\delta)}(\He^\delta, \mathcal{S}_\delta) $ stand for the Schwartz space of $ U(\delta) $ invariant functions on $ \He^\delta$ taking values in $ \mathcal{S}_\delta.$ The Gelfand transform for the pair $ (\He^\delta,U(\delta)) $ will be denoted by $ \mathcal{G}_\delta.$  We then observe that for Schwartz class functions the decomposition \eqref{direct-sum} takes the form
$$ \mathcal{S}(\He^n) =  \bigoplus_{\delta \in \widehat{K}_0} \mathcal{S}_{U(\delta)}(\He^\delta, \mathcal{S}_\delta).$$
We also use the notation 
$ \mathcal{S}(\R^2,\mathcal{S}_\delta) $ for the space of $ \mathcal{S}_\delta $ valued Schwartz functions on $ \R^2.$ We define $ \mathcal{S}(\Omega,\mathcal{S}_\delta) $ as in the scalar valued case and take their orthogonal sum to define
$$ \widehat{\mathcal{S}}(\Omega) = \bigoplus_{\delta \in \widehat{K}_0} \mathcal{S}(\Omega,\mathcal{S}_\delta) .$$
With these notations we can restate Theorem \ref{fourier-image} in the following form.\\

\begin{thm}\label{fourier-image-new} The Strichartz Fourier transform is an isomorphism between $ \mathcal{S}(\He^n)$ and $ \widehat{\mathcal{S}}(\Omega).$
\end{thm}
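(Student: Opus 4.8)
The plan is to reduce Theorem~\ref{fourier-image-new} to the scalar radial statement of Astengo et al.\ (Theorem~\ref{fourier-image}) via the spherical harmonic decomposition and the Hecke--Bochner formula~\eqref{fouri-delta-compo}. Since the Strichartz Fourier transform is block diagonal with respect to the decompositions indexed by $\delta=(p,q)\in\widehat{K}_0$, it suffices to treat one block and then reassemble.

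\textbf{Step 1 (topological direct sums).} I would first check that both $\mathcal{S}(\He^n)=\bigoplus_\delta\mathcal{S}_{U(\delta)}(\He^\delta,\mathcal{S}_\delta)$ and $\widehat{\mathcal{S}}(\Omega)=\bigoplus_\delta\mathcal{S}(\Omega,\mathcal{S}_\delta)$ hold in the topological sense: the projection $f\mapsto f_\delta$ is continuous, and conversely every seminorm of $\mathcal{S}(\He^n)$ is controlled by finitely many block seminorms together with rapid decay in $p+q$ of the spherical harmonic coefficients $f_{\delta,j}(r,t)=(f(r\,\cdot\,,t),S_{\delta,j})_{L^2(S^{2n-1})}$ (and likewise on the spectral side). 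I would also record the Hecke-type lemma that $f_\delta\mapsto(g_{\delta,j})_j$, with $g_{\delta,j}(z,t)=|z|^{-p-q}f_{\delta,j}(|z|,t)$ regarded as a radial function on $\He^{n+p+q}$, is a topological isomorphism of the $\delta$-block of $\mathcal{S}(\He^n)$ onto $\big(\mathcal{S}_{U(n+p+q)}(\He^{n+p+q})\big)^{d(\delta)}\cong\mathcal{S}_{U(\delta)}(\He^\delta,\mathcal{S}_\delta)$, with seminorm bounds depending on $\delta$ at most polynomially; this is the familiar ``division by $r^{p+q}$ and passage to $2(p+q)$ extra real dimensions'' argument.

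\textbf{Step 2 (the block map as a dressed Gelfand transform).} On the block $\delta=(p,q)$, formula~\eqref{fouri-delta-compo} identifies the Strichartz Fourier transform, after stripping the fixed radial factor $|z|^{p+q}\varphi_{k-q,\lambda}^{n+p+q-1}(|z|)$ carrying the $U(n)$-fixed direction of $\rho_k^\lambda$, with the map sending $(g_{\delta,j})_j$ to the $\mathcal{S}_\delta$-valued function on $\Omega$ whose $j$-th component is $(2\pi)^{-p-q}|\lambda|^{p+q}\,\mathcal{G}_{n+p+q}(g_{\delta,j})(a(p,q))$ for $\lambda>0$ (with $p,q$ interchanged for $\lambda<0$). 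By Theorem~\ref{fourier-image} applied componentwise, $g\mapsto\mathcal{G}_{n+p+q}(g)$ is a topological isomorphism of $\mathcal{S}_{U(n+p+q)}(\He^{n+p+q})$ onto the Schwartz space of the Heisenberg fan of $\He^{n+p+q}$, so the block statement reduces to showing that the dressing map $H\mapsto(2\pi)^{-p-q}|\lambda|^{p+q}H(a(p,q))$ is a topological isomorphism of that fan's Schwartz space onto $\mathcal{S}(\Omega,\mathcal{S}_\delta)$. Here the ray shift $a=(\lambda,(2k+n)|\lambda|)\mapsto a(p,q)=(\lambda,(2k+p-q+n)|\lambda|)$ matches $R_k\subset\Omega$ with $R_{k-q}$ of the larger fan — vacuous when $k<q$, consistent with $\varphi_{k-q,\lambda}^{n+p+q-1}=0$ — and on the fan it is simply $(\lambda,\tau)\mapsto(\lambda,\tau+(p-q)|\lambda|)$, smooth on each half-plane $\pm\lambda>0$; realizing both spaces as quotients of $\mathcal{S}(\R^2)$ and using that $|\lambda|^{p+q}$ vanishes along $R_\infty$, one checks that this dressing map and its inverse preserve the respective quotients.

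\textbf{Step 3 (reassembly) and the main obstacle.} Combining Steps~1 and~2 with the $\delta$-uniformity yields a continuous bijection $\mathcal{S}(\He^n)\to\widehat{\mathcal{S}}(\Omega)$ with continuous inverse; injectivity is in any case immediate from the inversion formula of Theorem~\ref{inv-plan}. The hard part, I expect, is carrying Steps~1 and~2 out \emph{simultaneously}: arranging the block estimates with the correct (polynomial, resp.\ rapidly decaying) dependence on $\delta$ so that convergence block by block genuinely reconstitutes the Schwartz topology on both sides, and pinning down $\mathcal{S}(\Omega,\mathcal{S}_\delta)$ precisely enough that the dressing map lands \emph{onto} it rather than onto a proper subspace — that is, verifying that the ray shift together with the $|\lambda|^{p+q}$ multiplication neither loses nor creates any smoothness or moment condition across the limiting ray $R_\infty$. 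Everything else is a faithful translation of Astengo et al.'s theorem through the Hecke--Bochner identity~\eqref{fouri-delta}.
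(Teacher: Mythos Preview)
Your proposal is correct and follows essentially the same route as the paper: decompose $f$ into spherical harmonic types $f_\delta$, use the Hecke--Bochner formula \eqref{fouri-delta}--\eqref{fouri-delta-compo} to identify each block with a (vector-valued) Gelfand transform on $\He^{n+p+q}$, and invoke Theorem~\ref{fourier-image} of Astengo et al.\ componentwise. The paper's own argument is in fact considerably sketchier than yours---it simply records the correspondence $\widehat{f}_\delta\leftrightarrow m_\delta$ via $\mathcal{G}_\delta$ and declares the reassembly ``a routine matter,'' without isolating the dressing-map or $\delta$-uniformity issues you flag in Steps~2--3.
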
 

Even though this result is a consequence of Theorem \ref{fourier-image}, due to the various identifications we have made, the following explanations are in order. Given $ f \in \mathcal{S}(\He^n) $ and $ f_\delta $ defined as in \eqref{spher-harm-exp-one} the function 
$$ \widetilde{f}_\delta(z,t)  =  \sum_{j=1}^{d(\delta)}g_{\delta,j}(z,t)\, S_{\delta,j},\,\,\,\,  (z,t) \in \He^\delta $$
is an element of the space $ \mathcal{S}_{U(\delta)}(\He^\delta, \mathcal{S}_\delta).$  The Gelfand transform $ \mathcal{G}_\delta \widetilde{f}_\delta $ is a function on the Heisenberg fan $ \Omega_\delta $ for the pair $(\He^\delta,U(\delta)) $ which is a proper subset of $ \Omega.$ By Theorem \ref{fourier-image}  there exists $ m_\delta \in \mathcal{S}(\Omega,\mathcal{S}_\delta) $ such that $ \mathcal{G}_\delta \widetilde{f}_\delta $ is the restriction of $ m_\delta $ to $ \Omega_\delta.$ 
If $ m_{\delta,j} $ are the components of $ m_\delta$ then we have the relation $ \mathcal{G}_\delta \widetilde{f}_\delta = \sum_{j=1}^{d(\delta)}m_{\delta,j}\, S_{\delta,j}$ and the Strichartz Fourier transform of $ f_\delta $ considered as a function on $ \He^n $ is given by
$$ \widehat{f}_\delta(a,z)  = (2\pi)^{-p-q} |\lambda|^{p+q} \, \Big(  \sum_{j=1}^{d(\delta)} P_{\delta}^j(z)\, m_{\delta,j}(a(p,q))\Big)  \, \varphi_{k-q,\lambda}^{n+p+q-1}(z)$$
for $ \lambda > 0 $ with a similar expression for $ \lambda <0.$ Thus the correspondence alluded to in the statement of the theorem is the one given by $\widehat{f}_\delta \rightarrow m_\delta.$ Conversely, given $ m_\delta \in \mathcal{S}(\Omega,\mathcal{S}_\delta) $ we can define $ g_{\delta,j} $ by applying $ \mathcal{G}_\delta^{-1} $ and the function $ f_\delta $ is then defined as in \eqref{spher-harm-exp-one}. It is a routine matter to check that the resulting $ f $ is Schwartz.\\

\section{ Group Fourier transform vs. Strichartz Fourier transform} 
In this section we investigate the relation between group Fourier transform and the Strichartz Fourier transform on $ \He^n.$ Though the former is defined in terms of the Schr\"odinger representations $ \pi_\lambda $ and the latter in terms of $ \rho_k^\lambda $ there is a direct connection between $ \widehat{f}(\lambda) $ and $ \widehat{f}(a,z).$ We will show that this relation allows us to recast some theorems  for the group Fourier transform in terms of the Strichartz Fourier transform.\\

Combining \eqref{projection} and \eqref{wplan} we obtain the following relation between the Strichartz and the group Fourier transforms:
$$ \int_{\C^n} |\widehat{f}(a,z)|^2 dz = (2\pi)^n\, |\lambda|^{-n} \, \|\widehat{f}(\lambda) P_k(\lambda)\|_{HS}^2.$$
As an application of this relation, let us rewrite theorems of Hardy and Ingham for the Heisenberg group in terms of the Strichartz Fourier transform.\\

Let  $ p_a $ stand for the heat kernel associated to the sublaplacian $ \mathcal{L} $ on $ \He^n.$ The Heisenberg analogue of the classical Hardy's uncertainty principle reads as follows (see \cite[Theorem 2.9.2, p. 89]{ST3}): \\
 
{\it For a nontrivial function $ f \in L^1(\He^n) $ the condtions
$$ |f(z,t)| \leq c\, p_a(z,t),\,\,\, \widehat{f}(\lambda)^\ast \widehat{f}(\lambda) \leq c \, \widehat{p_{2b}}(\lambda) $$
cannot hold  simultaneously unless $ a \geq b.$} We can now restate this result in the following form.

\begin{thm}\label{hardy} Suppose $f \in L^1(\He^n) $ satisfies the conditions
$$ |f(z,t)| \leq c\, p_a(z,t),\,\,  |\lambda|^n \int_{\C^n} |\widehat{f}(a,z)|^2 dz \leq c\, \frac{(k+n-1)!}{k!(n-1)!}\, e^{-2b(2k+n)|\lambda|} $$
for every $ a \in R_k.$ Then $ f = 0 $ whenever $ a< b.$
\end{thm}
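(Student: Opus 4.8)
The plan is to show that, once the identity recorded at the beginning of this section is taken into account, the two hypotheses of Theorem~\ref{hardy} are merely a reformulation of the hypotheses of the Heisenberg analogue of Hardy's theorem quoted just before it, so that the conclusion follows from (the proof of) that theorem. The first hypothesis $ |f(z,t)|\le c\,p_a(z,t) $ is already in the required form; all the work concerns the second one. Recall that
$$ \int_{\C^n}|\widehat f(a,z)|^2\,dz=(2\pi)^n|\lambda|^{-n}\,\|\widehat f(\lambda)P_k(\lambda)\|_{HS}^2,\qquad a=(\lambda,(2k+n)|\lambda|)\in R_k, $$
and that, since $ \widehat{p_s}(\lambda)=e^{-sH(\lambda)} $ and $ P_k(\lambda) $ has rank $ \tfrac{(k+n-1)!}{k!\,(n-1)!} $,
$$ \tr\big(\widehat{p_{2b}}(\lambda)P_k(\lambda)\big)=e^{-2b(2k+n)|\lambda|}\,\tfrac{(k+n-1)!}{k!\,(n-1)!}. $$
Feeding these into the second hypothesis of Theorem~\ref{hardy} turns it into the statement
$$ \|\widehat f(\lambda)P_k(\lambda)\|_{HS}^2\le c\,(2\pi)^{-n}\,\tr\big(\widehat{p_{2b}}(\lambda)P_k(\lambda)\big)\qquad(k\ge 0,\ \text{a.e. }\lambda), $$
which is the Hilbert--Schmidt, level-by-level reformulation of the operator inequality $ \widehat f(\lambda)^\ast\widehat f(\lambda)\le c\,\widehat{p_{2b}}(\lambda) $ of the classical statement: testing the latter against $ P_k(\lambda) $ and taking traces gives exactly the displayed bound.

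Next I would observe that this Hilbert--Schmidt bound already supplies everything the proof of the Heisenberg Hardy theorem needs. That proof analyses, for each fixed pair $ \alpha,\beta\in\Na^n $, the matrix coefficient $ \lambda\mapsto\langle\widehat f(\lambda)\Phi_\alpha^\lambda,\Phi_\beta^\lambda\rangle $: the hypothesis $ |f|\le c\,p_a $ continues it holomorphically to a strip whose width is governed by $ a $, and the Fourier-side hypothesis furnishes, with $ k=|\alpha| $, the decay $ |\langle\widehat f(\lambda)\Phi_\alpha^\lambda,\Phi_\beta^\lambda\rangle|\le C\,e^{-b(2k+n)|\lambda|} $ on the real axis, after which a one-dimensional Hardy/Phragm\'en--Lindel\"of argument forces the coefficient to vanish when $ a<b $. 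Now, since $ \Phi_\alpha^\lambda $ lies in the range of $ P_k(\lambda) $ when $ |\alpha|=k $,
$$ |\langle\widehat f(\lambda)\Phi_\alpha^\lambda,\Phi_\beta^\lambda\rangle|\le\|\widehat f(\lambda)P_k(\lambda)\Phi_\alpha^\lambda\|\le\|\widehat f(\lambda)P_k(\lambda)\|_{HS}\le\big(c(2\pi)^{-n}\big)^{1/2}\Big(\tfrac{(k+n-1)!}{k!\,(n-1)!}\Big)^{1/2}e^{-b(2k+n)|\lambda|} $$
for every $ \beta $; this is precisely the bound used in the classical proof, with the harmless extra factor $ \big(\tfrac{(k+n-1)!}{k!(n-1)!}\big)^{1/2} $, a constant depending only on $ k=|\alpha| $ (which is frozen throughout the analysis of that coefficient) and on $ n $. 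Hence the classical argument applies essentially verbatim and yields $ \langle\widehat f(\lambda)\Phi_\alpha^\lambda,\Phi_\beta^\lambda\rangle=0 $ for all $ \alpha,\beta $ whenever $ a<b $; thus $ \widehat f(\lambda)=0 $ for a.e. $ \lambda $, and so $ f=0 $.

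The point at which the argument is not purely formal --- and what I would regard as the crux --- is exactly the passage from the scalar Strichartz datum to the pointwise matrix-coefficient bounds: the Strichartz transform only sees $ \|\widehat f(\lambda)P_k(\lambda)\|_{HS} $, an $ \ell^2 $-average over the $ k $-th Hermite level, whereas the classical proof wants to control each matrix coefficient individually. This is bought cheaply, at the cost of the polynomial-in-$ k $ factor $ \tfrac{(k+n-1)!}{k!(n-1)!} $, by the trivial remark that a non-negative term is dominated by the sum it belongs to; and since this factor is a mere constant once the Hermite level is fixed, nothing is lost in the conclusion $ f=0 $. Alternatively, one may package the same estimate as a weighted operator inequality $ \widehat f(\lambda)^\ast\widehat f(\lambda)\le C\,(2\sinh(2(b-b')|\lambda|))^{-n}\,\widehat{p_{2b'}}(\lambda) $ for any $ b'<b $ (obtained from the level bound by the Cauchy--Schwarz inequality and the identity $ \sum_k\tfrac{(k+n-1)!}{k!(n-1)!}e^{-s(2k+n)|\lambda|}=(2\sinh(s|\lambda|))^{-n} $), and quote the version of the Heisenberg Hardy theorem that tolerates a polynomial weight in $ \lambda $; either way the strict inequality $ a<b $ provides the necessary room.
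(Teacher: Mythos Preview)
Your proposal is correct and follows the same route as the paper: use the identity $\int_{\C^n}|\widehat f(a,z)|^2\,dz=(2\pi)^n|\lambda|^{-n}\|\widehat f(\lambda)P_k(\lambda)\|_{HS}^2$ to convert the Strichartz hypothesis into the level-wise bound \eqref{hardy-mod}, and then appeal to the proof of \cite[Theorem 2.9.2]{ST3}. The paper simply asserts that ``an examination of the proof'' shows \eqref{hardy-mod} suffices; you actually carry out that examination, bounding each matrix coefficient $\langle\widehat f(\lambda)\Phi_\alpha^\lambda,\Phi_\beta^\lambda\rangle$ by $\|\widehat f(\lambda)P_k(\lambda)\|_{HS}$ and noting that the extra factor $\big(\tfrac{(k+n-1)!}{k!(n-1)!}\big)^{1/2}$ is harmless since $k=|\alpha|$ is fixed in the one-variable Hardy argument --- so your write-up is in fact more complete than the paper's.
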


The decay condition on $ \widehat{f}(a,z) $ comes from the Hardy condition $ \widehat{f}(\lambda)^\ast \widehat{f}(\lambda) \leq c \, \widehat{p_{2b}}(\lambda).$ In fact, it is well known that $ \widehat{p_b}(\lambda) = e^{-bH(\lambda)} $ is the semigroup generated by the Hermite operator and an easy calculation using the Hermite basis shows that 
\begin{equation}\label{hardy-mod} \| \widehat{f}(\lambda) P_k(\lambda)\|_{HS}^2 \leq c\, \frac{(k+n-1)!}{k!(n-1)!}\, e^{-2b(2k+n)|\lambda|} \end{equation}
under the Hardy condition on $ \widehat{f}(\lambda).$ An examination of the proof of Hardy's theorem presented in \cite[Theorem 2.9.2]{ST3}, shows that the result holds under the weaker assumption \eqref{hardy-mod}. We remark that other more refined versions of Hardy's theorem can also be stated in terms of $ \widehat{f}(a,z).$\\

Ingham's uncertainty principle is another theorem that has received considerable attention in recent years. For functions on $ \R $ Ingham proved in 1934 the following result:\\

{\it Let $ \theta $ be a nonnegative even function on $ \R $ which decreases to $ 0 $ at infinity. There exists a compactly supported function $ f $ on $ \R$ whose Fourier transform satisfies the decay condition $$ |\widehat{f}(y)| \leq c\, e^{-|y| \, \theta(y)} $$ if and only if $ \int_1^\infty \theta(t) t^{-1} dt < \infty.$}\\

Recently the following analogue of Ingham's theorem for the group Fourier transform has been proved in \cite{BGST}:\\

{\it Let $ \Theta $ be a nonnegative even function on $ \R $ which decreases to $ 0 $ at infinity. There exists a compactly supported function $ f $ on $ \He^n$ whose Fourier transform satisfies the decay condition 
$$ |\widehat{f}(\lambda)| \leq c\, e^{-\sqrt{H(\lambda)}\, \Theta(\sqrt{H(\lambda)})} $$ if and only if $ \int_1^\infty \Theta(t) t^{-1} dt < \infty.$}\\

As before using the relation between $ \widehat{f}(\lambda) $ and $ \widehat{f}(a,z) $ we can restate the Ingham's theorem in the following form.

\begin{thm}\label{hardy} Let $ \Theta $ be a nonnegative even function on $ \R $ which decreases to $ 0 $ at infinity. There exists a compactly supported function $ f $ on $ \He^n$ whose Fourier transform satisfies the decay condition 
$$   |\lambda|^n \int_{\C^n} |\widehat{f}(a,z)|^2 dz \leq c\, \frac{(k+n-1)!}{k!(n-1)!}\, e^{-\sqrt{(2k+n)|\lambda|}\,\Theta(\sqrt{(2k+n)|\lambda|})} $$
 if and only if $ \int_1^\infty \Theta(t) t^{-1} dt < \infty.$
\end{thm}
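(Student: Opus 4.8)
The plan is to reduce this to the group Fourier transform version of Ingham's theorem proved in \cite{BGST}, following the same template used above for Hardy's theorem. The only structural input I would need is the identity recorded at the beginning of this section,
$$ |\lambda|^n \int_{\C^n} |\widehat f(a,z)|^2\, dz \;=\; (2\pi)^n\, \|\widehat f(\lambda) P_k(\lambda)\|_{HS}^2, \qquad a = \big(\lambda,(2k+n)|\lambda|\big), $$
obtained by combining \eqref{projection} with the Weyl--Plancherel formula \eqref{wplan}; the sign discrepancy between $\widehat f(a,z) = f^{-\lambda}\ast_{-\lambda}\varphi_{k,\lambda}^{n-1}(z)$ and $\widehat f(\lambda) = W_\lambda(f^\lambda)$ is immaterial here, since $H(\lambda)=H(-\lambda)$ on eigenspaces and $\Theta$ is even. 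Because $\dim\big(P_k(\lambda)L^2(\R^n)\big)=\operatorname{tr} P_k(\lambda)=\frac{(k+n-1)!}{k!(n-1)!}$, the displayed identity shows that the decay hypothesis of the theorem is exactly the assertion
$$ \|\widehat f(\lambda) P_k(\lambda)\|_{HS}^2 \;\le\; c\,\operatorname{tr} P_k(\lambda)\; e^{-\sqrt{(2k+n)|\lambda|}\,\Theta(\sqrt{(2k+n)|\lambda|})}\qquad\text{for all } k\in\mathbb N,\ \lambda\neq 0, $$
which is the Ingham analogue of the weakened Hardy condition \eqref{hardy-mod}.

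For the ``if'' half I would start from $\int_1^\infty\Theta(t)t^{-1}\,dt<\infty$ and invoke \cite{BGST} to produce a nontrivial compactly supported $f$ on $\He^n$ whose group Fourier transform has the Ingham decay of that theorem; in particular, using $\|\widehat f(\lambda)\|\le\|f\|_1$, one gets $\|\widehat f(\lambda)P_k(\lambda)\|^2\le c\,e^{-\sqrt{(2k+n)|\lambda|}\,\Theta(\sqrt{(2k+n)|\lambda|})}$ for all $k,\lambda$. Since a rank-$\operatorname{tr} P_k(\lambda)$ operator satisfies $\|\widehat f(\lambda)P_k(\lambda)\|_{HS}^2\le \operatorname{tr} P_k(\lambda)\,\|\widehat f(\lambda)P_k(\lambda)\|^2$, we obtain
$$ \|\widehat f(\lambda)P_k(\lambda)\|_{HS}^2 \;\le\; c\,\operatorname{tr} P_k(\lambda)\; e^{-\sqrt{(2k+n)|\lambda|}\,\Theta(\sqrt{(2k+n)|\lambda|})}, $$
and, reading this back through the displayed identity, $f$ satisfies the bound required by the theorem.

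For the ``only if'' half, the hypothesis on a nontrivial compactly supported $f$ rewrites (again via the displayed identity) as $\|\widehat f(\lambda)P_k(\lambda)\|_{HS}^2\le c\,\frac{(k+n-1)!}{k!(n-1)!}\,e^{-\sqrt{(2k+n)|\lambda|}\,\Theta(\sqrt{(2k+n)|\lambda|})}$, and I would argue, just as was remarked for Hardy's theorem, that an inspection of the proof of the group-theoretic Ingham theorem in \cite{BGST} shows it uses only such bounds on the blocks $\widehat f(\lambda)P_k(\lambda)$, and that the combinatorial factor $\frac{(k+n-1)!}{k!(n-1)!}\le C_n(2k+n)^{n-1}$ is harmless: on the ray $R_k$ it perturbs $\Theta(s)$ by a term of order $s^{-1}\log s$, so that (since $\int_1^\infty s^{-2}\log s\,ds<\infty$) it affects neither the convergence nor the divergence of $\int_1^\infty\Theta(t)t^{-1}\,dt$; and if $\Theta$ were itself dominated by a multiple of $s^{-1}\log s$ on an unbounded set, then, $s^{-1}\log s$ being eventually decreasing and $\Theta$ monotone, the integral would converge automatically. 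The main obstacle, and the only step that is not purely formal, is precisely this last point: checking that the argument of \cite{BGST} is robust when its operator hypothesis on $\widehat f(\lambda)$ is replaced by the weaker Hilbert--Schmidt bound on $\widehat f(\lambda)P_k(\lambda)$, and carrying out the bookkeeping that absorbs the dimensional factor $\operatorname{tr} P_k(\lambda)$ into the exponential without disturbing the Dini-type criterion $\int_1^\infty\Theta(t)t^{-1}\,dt<\infty$.
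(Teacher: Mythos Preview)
Your proposal is correct and matches the paper's approach exactly: the paper's entire proof consists of the sentence ``The proof of Ingham's theorem given in \cite{BGST} can be easily modified to prove this version,'' invoking the same block identity $|\lambda|^n\int_{\C^n}|\widehat f(a,z)|^2\,dz=(2\pi)^n\|\widehat f(\lambda)P_k(\lambda)\|_{HS}^2$ that you use. Your write-up is in fact more detailed than the paper's, since you spell out both directions and explain why the combinatorial factor $\operatorname{tr} P_k(\lambda)$ is absorbed harmlessly.
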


The proof of Ingham's theorem given in  \cite{BGST}  can be easily modified to prove this version. We can also prove other refined versions stated and proved elsewhere. For more on Ingham's theorem and its close relatives we refer to \cite{GT} and the references therein.

\section*{Acknowledgments}


\end{document}